
\documentclass[12pt]{amsart}
\usepackage{amsmath,amsthm,amsfonts,amssymb,mathrsfs}
\usepackage{color}

\usepackage{tikz}

\usepackage{amssymb,cite}
\usepackage[colorlinks,plainpages,citecolor=magenta, linkcolor=blue, backref]{hyperref}

\usepackage{hyperref}
\date{\today}

 \setlength{\textwidth}{18.6truecm}
 \setlength{\textheight}{25.truecm}
 \setlength{\oddsidemargin}{-30pt}
 \setlength{\evensidemargin}{-30pt}
 \setlength{\topmargin}{-35pt}


\usepackage{hyperref}
\input{xy}
 \xyoption{all}
 \xyoption{arc}

\newtheorem{theorem}{Theorem}

\newtheorem{proposition}{Proposition}

\newtheorem{lemma}{Lemma}
\theoremstyle{definition}

\newtheorem{example}{Example}
\newtheorem{remark}{Remark}

\begin{document}

\title[On the semigroup of injective monoid endomor\-phisms of the monoid $\boldsymbol{B}_{\omega}^{\mathscr{F}}$]{On the semigroup of injective monoid endomor\-phisms of the monoid $\boldsymbol{B}_{\omega}^{\mathscr{F}}$ with the two-elements family $\mathscr{F}$ of inductive nonempty subsets of $\omega$}
\author{Oleg Gutik and Inna Pozdniakova}
\address{Ivan Franko National University of Lviv, Universytetska 1, Lviv, 79000, Ukraine}
\email{oleg.gutik@lnu.edu.ua, pozdnyakova.inna@gmail.com}

\keywords{Bicyclic monoid, inverse semigroup, bicyclic extension, injective endomorphism, Green's relations.}

\subjclass[2020]{Primary 20M15, 20M18; Secondary 20M20, 20M05 20M10}

\begin{abstract}
We study injective endomorphisms of the semigroup $\boldsymbol{B}_{\omega}^{\mathscr{F}}$ with the two-elements family $\mathscr{F}$ of inductive nonempty subsets of $\omega$. We describe the elements of the semigroup $\boldsymbol{End}^1_*(\boldsymbol{B}_{\omega}^{\mathscr{F}})$ of all injective monoid endomorphisms of the monoid $\boldsymbol{B}_{\omega}^{\mathscr{F}}$, and show that Green's relations $\mathscr{R}$, $\mathscr{L}$, $\mathscr{H}$, $\mathscr{D}$, and $\mathscr{J}$  on $\boldsymbol{End}^1_*(\boldsymbol{B}_{\omega}^{\mathscr{F}})$ coincide with the relation of equality.
\end{abstract}

\maketitle


\section{\textbf{Introduction, motivation and main definitions}}

We shall follow the terminology of~\cite{Clifford-Preston-1961, Clifford-Preston-1967, Lawson=1998}. By $\omega$ we denote the set of all non-negative integers, by $\mathbb{N}$ the set of all positive integers, and by $\mathbb{Z}$ the set of all integers.

\smallskip

Let $\mathscr{P}(\omega)$ be  the family of all subsets of $\omega$. For any $F\in\mathscr{P}(\omega)$ and $n\in\mathbb{Z}$ we put $nF=\{nk\colon k\in F\}$ if $F\neq\varnothing$ and $n\varnothing=\varnothing$. A subfamily $\mathscr{F}\subseteq\mathscr{P}(\omega)$ is called \emph{${\omega}$-closed} if $F_1\cap(-n+F_2)\in\mathscr{F}$ for all $n\in\omega$ and $F_1,F_2\in\mathscr{F}$. For any $a\in\omega$ we denote
\begin{equation*}
[a)=\{x\in\omega\colon x\geqslant a\}.
\end{equation*}

A subset $A$ of $\omega$ is said to be \emph{inductive}, if $i\in A$ implies $i+1\in A$. In particular the empty set $\varnothing$ is an inductive subset of $\omega$.

\begin{remark}[\cite{Gutik-Mykhalenych=2021}]\label{remark-1.1}
\begin{enumerate}
  \item\label{remark-1.1(1)} By Lemma~6 from \cite{Gutik-Mykhalenych=2020}, a nonempty subset $F\subseteq \omega$ is inductive in $\omega$ if and only $(-1+F)\cap F=F$.
  \item\label{remark-1.1(2)} Since the set $\omega$ with the usual order is well-ordered, for any nonempty inductive subset $F$ in $\omega$ there exists nonnegative integer $n_F\in\omega$ such that $[n_F)=F$.
  \item\label{remark-1.1(3)} Statement \eqref{remark-1.1(2)} implies that the intersection of an arbitrary finite family of nonempty inductive subsets in $\omega$ is a nonempty inductive subset of  $\omega$.
\end{enumerate}
\end{remark}

A semigroup $S$ is called {\it inverse} if for any
element $x\in S$ there exists a unique $x^{-1}\in S$ such that
$xx^{-1}x=x$ and $x^{-1}xx^{-1}=x^{-1}$. The element $x^{-1}$ is
called the {\it inverse of} $x\in S$. If $S$ is an inverse
semigroup, then the function $\operatorname{inv}\colon S\to S$
which assigns to every element $x$ of $S$ its inverse element
$x^{-1}$ is called the {\it inversion}.

\smallskip

If $S$ is a semigroup, then we shall denote the subset of all
idempotents in $S$ by $E(S)$. If $S$ is an inverse semigroup, then
$E(S)$ is closed under multiplication and we shall refer to $E(S)$ as a
\emph{band} (or the \emph{band of} $S$). Then the semigroup
operation on $S$ determines the following partial order $\preccurlyeq$
on $E(S)$:
\begin{equation*}
e\preccurlyeq f \quad \hbox{ if and only if } \quad ef=fe=e.
\end{equation*}
This order is
called the {\em natural partial order} on $E(S)$. A \emph{semilattice} is a commutative semigroup of idempotents.

\smallskip

If $S$ is an inverse semigroup then the semigroup operation on $S$ determines the following partial order $\preccurlyeq$
on $S$:
\begin{equation*}
s\preccurlyeq t \quad \hbox{ if and only if there exists } \quad e\in E(S) \quad \hbox{ such that } \quad s=te.
\end{equation*}
This order is
called the {\em natural partial order} on $S$ \cite{Wagner-1952}.

\smallskip

The bicyclic monoid ${\mathscr{C}}(p,q)$ is the semigroup with the identity $1$ generated by two elements $p$ and $q$ subjected only to the condition $pq=1$. The semigroup operation on ${\mathscr{C}}(p,q)$ is determined as
follows:
\begin{equation*}
    q^kp^l\cdot q^mp^n=q^{k+m-\min\{l,m\}}p^{l+n-\min\{l,m\}}.
\end{equation*}
It is well known that the bicyclic monoid ${\mathscr{C}}(p,q)$ is a bisimple (and hence simple) combinatorial $E$-unitary inverse semigroup and every non-trivial congruence on ${\mathscr{C}}(p,q)$ is a group congruence \cite{Clifford-Preston-1961}.

\smallskip

On the set $\boldsymbol{B}_{\omega}=\omega\times\omega$ we define the semigroup operation ``$\cdot$'' in the following way
\begin{equation}\label{eq-1.1}
  (i_1,j_1)\cdot(i_2,j_2)=
  \left\{
    \begin{array}{ll}
      (i_1-j_1+i_2,j_2), & \hbox{if~} j_1\leqslant i_2;\\
      (i_1,j_1-i_2+j_2), & \hbox{if~} j_1\geqslant i_2.
    \end{array}
  \right.
\end{equation}
It is well known that the bicyclic monoid $\mathscr{C}(p,q)$ is isomorphic to the semigroup $\boldsymbol{B}_{\omega}$ and the mapping
\begin{equation*}
\mathfrak{h}\colon \mathscr{C}(p,q)\to \boldsymbol{B}_{\omega}, \quad q^kp^l\mapsto (k,l)
\end{equation*}
is an isomorphism
(see: \cite[Section~1.12]{Clifford-Preston-1961} or \cite[Exercise IV.1.11$(ii)$]{Petrich-1984}).

\smallskip

Next, we shall describe the construction which is introduced in \cite{Gutik-Mykhalenych=2020}.

Let $\boldsymbol{B}_{\omega}$ be the bicyclic monoid and $\mathscr{F}$ be an ${\omega}$-closed subfamily of $\mathscr{P}(\omega)$. On the set $\boldsymbol{B}_{\omega}\times\mathscr{F}$ we define the semigroup operation ``$\cdot$'' in the following way
\begin{equation}\label{eq-1.2}
  (i_1,j_1,F_1)\cdot(i_2,j_2,F_2)=
  \left\{
    \begin{array}{ll}
      (i_1-j_1+i_2,j_2,(j_1-i_2+F_1)\cap F_2), & \hbox{if~} j_1\leqslant i_2;\\
      (i_1,j_1-i_2+j_2,F_1\cap (i_2-j_1+F_2)), & \hbox{if~} j_1\geqslant i_2.
    \end{array}
  \right.
\end{equation}
In \cite{Gutik-Mykhalenych=2020} is proved that if the family $\mathscr{F}\subseteq\mathscr{P}(\omega)$ is ${\omega}$-closed then $(\boldsymbol{B}_{\omega}\times\mathscr{F},\cdot)$ is a semigroup. Moreover, if an ${\omega}$-closed family  $\mathscr{F}\subseteq\mathscr{P}(\omega)$ contains the empty set $\varnothing$, then the set
$ 
  \boldsymbol{I}=\{(i,j,\varnothing)\colon i,j\in\omega\}
$ 
is an ideal of the semigroup $(\boldsymbol{B}_{\omega}\times\mathscr{F},\cdot)$. For any ${\omega}$-closed family $\mathscr{F}\subseteq\mathscr{P}(\omega)$ the following semigroup
\begin{equation*}
  \boldsymbol{B}_{\omega}^{\mathscr{F}}=
\left\{
  \begin{array}{ll}
    (\boldsymbol{B}_{\omega}\times\mathscr{F},\cdot)/\boldsymbol{I}, & \hbox{if~} \varnothing\in\mathscr{F};\\
    (\boldsymbol{B}_{\omega}\times\mathscr{F},\cdot), & \hbox{if~} \varnothing\notin\mathscr{F}
  \end{array}
\right.
\end{equation*}
is defined in \cite{Gutik-Mykhalenych=2020}. The semigroup $\boldsymbol{B}_{\omega}^{\mathscr{F}}$ generalizes the bicyclic monoid and the countable semigroup of matrix units. It is proven in \cite{Gutik-Mykhalenych=2020} that $\boldsymbol{B}_{\omega}^{\mathscr{F}}$ is a combinatorial inverse semigroup and Green's relations, the natural partial order on $\boldsymbol{B}_{\omega}^{\mathscr{F}}$ and its set of idempotents are described.
Here, the criteria when the semigroup $\boldsymbol{B}_{\omega}^{\mathscr{F}}$ is simple, $0$-simple, bisimple, $0$-bisimple, or it has the identity, are given.
In particularly in \cite{Gutik-Mykhalenych=2020} it is proved that the semigroup $\boldsymbol{B}_{\omega}^{\mathscr{F}}$ is isomorphic to the semigrpoup of ${\omega}{\times}{\omega}$-matrix units if and only if $\mathscr{F}$ consists of a singleton set and the empty set, and $\boldsymbol{B}_{\omega}^{\mathscr{F}}$ is isomorphic to the bicyclic monoid if and only if $\mathscr{F}$ consists of a non-empty inductive subset of $\omega$.

\smallskip

Group congruences on the semigroup  $\boldsymbol{B}_{\omega}^{\mathscr{F}}$ and its homomorphic retracts  in the case when an ${\omega}$-closed family $\mathscr{F}$ consists of inductive non-empty subsets of $\omega$ are studied in \cite{Gutik-Mykhalenych=2021}. It is proven that a congruence $\mathfrak{C}$ on $\boldsymbol{B}_{\omega}^{\mathscr{F}}$ is a group congruence if and only if its restriction on a subsemigroup of $\boldsymbol{B}_{\omega}^{\mathscr{F}}$, which is isomorphic to the bicyclic semigroup, is not the identity relation. Also in \cite{Gutik-Mykhalenych=2021}, all non-trivial homomorphic retracts and isomorphisms  of the semigroup $\boldsymbol{B}_{\omega}^{\mathscr{F}}$ are described. In \cite{Gutik-Mykhalenych=2022} it is proved that an injective endomorphism $\varepsilon$ of the semigroup $\boldsymbol{B}_{\omega}^{\mathscr{F}}$ is the indentity transformation if and only if  $\varepsilon$ has three distinct fixed points, which is equivalent to existence non-idempotent element $(i,j,[p))\in\boldsymbol{B}_{\omega}^{\mathscr{F}}$ such that  $(i,j,[p))\varepsilon=(i,j,[p))$.

\smallskip

The semigroup  $\boldsymbol{B}_{\mathbb{Z}}^{\mathscr{F}}$ and its group of automorphisms in the case when an ${\omega}$-closed family $\mathscr{F}$ consists of inductive non-empty subsets of the set of integers $\mathbb{Z}$ are studied in \cite{Gutik-Pozdniakova=2021, Gutik-Pozdniakova=2023}.

\smallskip

In \cite{Gutik-Lysetska=2021, Lysetska=2020} the algebraic structure of the semigroup $\boldsymbol{B}_{\omega}^{\mathscr{F}}$ is established in the case when ${\omega}$-closed family $\mathscr{F}$ consists of atomic subsets of ${\omega}$.
Also, in \cite{Gutik-Popadiuk=2022, Gutik-Popadiuk=2023, Popadiuk=2022} the semigroup $\boldsymbol{B}_{\omega}^{\mathscr{F}_n}$ and its semigroup of endomorphisms are described in the case when the ${\omega}$-closed family $\mathscr{F}_n$ is generated by an $n$-elements interval of ${\omega}$.

\smallskip

It is well-known that every automorphism of the bicyclic monoid $\boldsymbol{B}_{\omega}$  is the identity self-map of $\boldsymbol{B}_{\omega}$ \cite{Clifford-Preston-1961}, and hence the group $\mathbf{Aut}(\boldsymbol{B}_{\omega})$ of automorphisms of $\boldsymbol{B}_{\omega}$ is trivial. In \cite{Gutik-Prokhorenkova-Sekh=2021} it is proved that the semigroups $\mathrm{\mathbf{End}}(\boldsymbol{B}_{\omega})$ of the endomorphisms of the bicyclic semigroup $\boldsymbol{B}_{\omega}$ is isomorphic to the semidirect products $(\omega,+)\rtimes_\varphi(\omega,*)$, where $+$ and $*$ are the usual addition and the usual multiplication on $\omega$.

\smallskip

Later we assume that an ${\omega}$-closed family $\mathscr{F}$ consists of nonempty inductive nonempty subsets of $\omega$.

In this paper we study injective endomorphisms of the semigroup $\boldsymbol{B}_{\omega}^{\mathscr{F}}$ for the family $\mathscr{F}=\{[0), [1)\}$. We describe
the elements of the monoid $\boldsymbol{End}^1_*(\boldsymbol{B}_{\omega}^{\mathscr{F}})$ of all injective monoid endomorphisms of the monoid $\boldsymbol{B}_{\omega}^{\mathscr{F}}$, and show that Green's relations $\mathscr{R}$, $\mathscr{L}$, $\mathscr{H}$, $\mathscr{D}$, and $\mathscr{J}$  on  $\boldsymbol{End}^1_*(\boldsymbol{B}_{\omega}^{\mathscr{F}})$ coincide with the relation of equality.

\section{{On injective endomorphisms of the monoid $\boldsymbol{B}_{\omega}^{\mathscr{F}}$}}\label{section-2}

\begin{remark}\label{remark-2.1}
By Proposition~1 of \cite{Gutik-Mykhalenych=2021} for any $\omega$-closed family $\mathscr{F}$ of inductive subsets in $\mathscr{P}(\omega)$ there exists an $\omega$-closed family $\mathscr{F}^*$ of inductive subsets in $\mathscr{P}(\omega)$ such that $[0)\in \mathscr{F}^*$ and the semigroups $\boldsymbol{B}_{\omega}^{\mathscr{F}}$ and $\boldsymbol{B}_{\omega}^{\mathscr{F}^*}$ are isomorphic. Hence without loss of generality we may assume that the family $\mathscr{F}$ contains the set $[0)$.
\end{remark}

If $\mathscr{F}$ is an arbitrary $\omega$-closed family $\mathscr{F}$ of inductive subsets in $\mathscr{P}(\omega)$ and $[s)\in \mathscr{F}$ for some $s\in \omega$ then
\begin{equation*}
  \boldsymbol{B}_{\omega}^{\{[s)\}}=\{(i,j,[s))\colon i,j\in\omega\}
\end{equation*}
is a subsemigroup of $\boldsymbol{B}_{\omega}^{\mathscr{F}}$ \cite{Gutik-Mykhalenych=2021} and by Proposition~3 of \cite{Gutik-Mykhalenych=2020} the semigroup $\boldsymbol{B}_{\omega}^{\{[s)\}}$ is isomorphic to the bicyclic semigroup.

\begin{lemma}\label{lemma-2.2}
Let $\mathscr{F}$ be an arbitrary $\omega$-closed family $\mathscr{F}$ of inductive subsets in $\mathscr{P}(\omega)$ and $\alpha$ be an endomorphism of the monoid $\boldsymbol{B}_{\omega}^{\mathscr{F}}$. Then there exists a positive integer $k$ such that
\begin{equation*}
(i,j,[0))\alpha=(ki,kj,[0))
\end{equation*}
for all $i,j\in\omega$.
\end{lemma}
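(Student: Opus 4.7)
The plan is to reduce to the action of $\alpha$ on the bicyclic submonoid $\boldsymbol{B}_{\omega}^{\{[0)\}}$ by tracking its effect on the canonical generators $\mathbf{q}:=(1,0,[0))$ and $\mathbf{p}:=(0,1,[0))$. Observe first that $(0,0,[0))$ is the identity of $\boldsymbol{B}_{\omega}^{\mathscr{F}}$, and a direct calculation from \eqref{eq-1.2} gives $\mathbf{p}\mathbf{q}=(0,0,[0))$. Since $\alpha$ is a monoid endomorphism, applying $\alpha$ produces $\alpha(\mathbf{p})\cdot\alpha(\mathbf{q})=(0,0,[0))$.

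Write $\alpha(\mathbf{p})=(a,b,F)$ and $\alpha(\mathbf{q})=(c,d,G)$ with $F,G\in\mathscr{F}$. A case analysis on~\eqref{eq-1.2} applied to $\alpha(\mathbf{p})\alpha(\mathbf{q})$ (the two cases $b\le c$ and $b\ge c$) collapses to a single configuration: $a=d=0$, $b=c$, and $F\cap G=[0)$. By Remark~\ref{remark-1.1}\eqref{remark-1.1(2)} every nonempty inductive set in $\mathscr{F}$ has the form $[n)$, so writing $F=[s)$ and $G=[t)$ we get $[s)\cap[t)=[\max\{s,t\})=[0)$, which forces $s=t=0$. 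Setting $k:=b=c$, we conclude $\alpha(\mathbf{p})=(0,k,[0))$ and $\alpha(\mathbf{q})=(k,0,[0))$ for some $k\in\omega$.

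The remainder is a routine induction on powers, exploiting the fact that $[0)=\omega$ absorbs any shift $-k+[0)$ under intersection with $[0)$: one checks
\[
(k,0,[0))^{i}=(ki,0,[0))\qquad\text{and}\qquad(0,k,[0))^{j}=(0,kj,[0))
\]
for all $i,j\in\omega$. Since $(i,j,[0))=\mathbf{q}^{i}\mathbf{p}^{j}$ inside the bicyclic submonoid, we then compute
\[
(i,j,[0))\alpha=\alpha(\mathbf{q})^{i}\cdot\alpha(\mathbf{p})^{j}=(ki,0,[0))\cdot(0,kj,[0))=(ki,kj,[0)),
\]
which is the desired formula.

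The principal obstacle is the opening step: one must show that the relation $\alpha(\mathbf{p})\alpha(\mathbf{q})=(0,0,[0))$ not only pins down the first two coordinates of the images of the generators, but actually forces their third coordinates to be exactly $[0)$, rather than merely some inductive subsets whose intersection happens to be $[0)$. Once the exact form of $\alpha(\mathbf{p})$ and $\alpha(\mathbf{q})$ is known, the later computations are immediate. (The argument strictly yields $k\in\omega$; the degenerate value $k=0$ corresponds to the trivial endomorphism that collapses $\boldsymbol{B}_{\omega}^{\{[0)\}}$ onto the identity, so ``positive integer'' should be read as $k\in\omega$ unless an implicit non-triviality assumption on $\alpha$ is in force.)
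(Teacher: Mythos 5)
Your proof is correct, but it follows a genuinely different route from the paper's. The paper's proof is essentially a two-line citation: it invokes Proposition~4 of \cite{Gutik-Mykhalenych=2021} to conclude that a monoid endomorphism maps $\boldsymbol{B}_{\omega}^{\{[0)\}}$ into itself (because it fixes $(0,0,[0))$), and then quotes Lemma~1 of \cite{Gutik-Prokhorenkova-Sekh=2021}, which classifies endomorphisms of the bicyclic monoid, to obtain the form $(i,j,[0))\alpha=(ki,kj,[0))$. You instead prove both facts from scratch: the single relation $\mathbf{p}\mathbf{q}=(0,0,[0))$, pushed through $\alpha$ and the multiplication rule \eqref{eq-1.2}, simultaneously forces the images of the generators to land in $\boldsymbol{B}_{\omega}^{\{[0)\}}$ (since $F\cap G=[0)$ with $F,G$ nonempty inductive forces $F=G=[0)$ by Remark~\ref{remark-1.1}\eqref{remark-1.1(2)}) and pins them to $(0,k,[0))$ and $(k,0,[0))$; the rest is generation by $\mathbf{q}^i\mathbf{p}^j$. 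Your approach buys self-containedness and makes visible exactly where the third coordinate gets forced to $[0)$, at the cost of redoing in miniature what the two cited results already provide; the paper's approach is shorter and reuses structural facts it needs elsewhere anyway. Your closing caveat is also well taken: the argument (and indeed the statement) only yields $k\in\omega$ for a general monoid endomorphism, since the constant map onto the identity realizes $k=0$; the claim that $k$ is positive is harmless here only because the lemma is subsequently applied to injective endomorphisms, for which $k=0$ is excluded.
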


\begin{proof}
By Proposition~3 of \cite{Gutik-Mykhalenych=2020} the submonoid $\boldsymbol{B}_{\omega}^{\{[0)\}}$ is isomorphic to the bicyclic monoid. Since $(0,0,[0))\alpha=(0,0,[0))$, Proposition~4 of \cite{Gutik-Mykhalenych=2021} implies that
$
(\boldsymbol{B}_{\omega}^{\{[0)\}})\alpha\subseteq \boldsymbol{B}_{\omega}^{\{[0)\}}.
$
Then Lemma~1 of \cite{Gutik-Prokhorenkova-Sekh=2021} there exists a positive integer $k$ such that
$
(i,j,[0))\alpha=(ki,kj,[0))
$
for all $i,j\in\omega$.
\end{proof}

\begin{example}\label{example-2.3}
Let $\mathscr{F}=\{[0),[1)\}$. Fix an arbitrary positive integer $k$ and any $p\in\{0,\ldots,k-1\}$. For all $i,j\in\omega$ we define the transformation $\alpha_{k,p}$  of the semigroup $\boldsymbol{B}_{\omega}^{\mathscr{F}}$ in the following way
\begin{align*}
  (i,j,[0))\alpha_{k,p}&=(ki,kj,[0)), \\
  (i,j,[1))\alpha_{k,p}&=(p+ki,p+kj,[1)).
\end{align*}
It is obvious that $\alpha_{k,p}$ is an injective transformation of the monoid $\boldsymbol{B}_{\omega}^{\mathscr{F}}$.
\end{example}

\begin{lemma}\label{lemma-2.4}
Let $\mathscr{F}=\{[0),[1)\}$. Then for an arbitrary positive integer $k$ and any $p\in\{0,\ldots,k-1\}$ the map $\alpha_{k,p}$ is an injective endomorphism of the monoid $\boldsymbol{B}_{\omega}^{\mathscr{F}}$.
\end{lemma}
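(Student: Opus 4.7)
The plan is to handle injectivity and the homomorphism property separately. Injectivity of $\alpha_{k,p}$ is immediate from the definition: the ``layers'' $\{(i,j,[0))\colon i,j\in\omega\}$ and $\{(i,j,[1))\colon i,j\in\omega\}$ are preserved by $\alpha_{k,p}$ and have disjoint images (different third coordinates), while on each layer the action $(i,j)\mapsto(ki,kj)$ or $(i,j)\mapsto(p+ki,p+kj)$ is injective because $k\geq 1$. So the substance of the lemma lies in verifying $(x\cdot y)\alpha_{k,p}=(x\alpha_{k,p})\cdot(y\alpha_{k,p})$ for all $x,y\in\boldsymbol{B}_{\omega}^{\mathscr{F}}$.

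For the homomorphism check I would split into four cases according to the pair of third coordinates $(F_1,F_2)\in\{[0),[1)\}^2$. The case $(F_1,F_2)=([0),[0))$ is Lemma~\ref{lemma-2.2} applied to the restriction of $\alpha_{k,p}$ to the submonoid $\boldsymbol{B}_{\omega}^{\{[0)\}}$, which is isomorphic to the bicyclic monoid. For $(F_1,F_2)=([1),[1))$, direct inspection of~\eqref{eq-1.2} shows that the third coordinate of such a product is again $[1)$ (each relevant intersection has the form $[a)\cap[1)$ with $a\leq 1$), so the multiplication reduces to the bicyclic rule on the first two coordinates, and the required identity follows from the cancellation $(p+ki_1)-(p+kj_1)=k(i_1-j_1)$ together with its symmetric counterpart.

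The mixed cases $([0),[1))$ and $([1),[0))$ are the core of the argument. The critical fact, valid because $0\leq p<k$, is the equivalence
\[
kj_1\leq p+ki_2 \iff j_1\leq i_2, \qquad p+kj_1\leq ki_2 \iff j_1<i_2\ \text{or}\ (p=0,\,j_1=i_2),
\]
since any nonzero value of $k(j_1-i_2)$ has absolute value at least $k$, whereas $p<k$. This ensures that the same branch of formula~\eqref{eq-1.2} is triggered on both sides of the desired equality. One then splits by the sign of $j_1-i_2$ and, in each subcase, simplifies the intersection determining the third coordinate, reducing expressions like $[kj_1-p-ki_2)\cap[1)$ and $[0)\cap[p+1-k(j_1-i_2))$ to $[0)$ or $[1)$ using the bound $0\leq p<k$.

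The main obstacle is careful bookkeeping at the boundary $j_1=i_2$: there a mixed product switches third coordinate (for instance $(i_1,j_1,[0))\cdot(j_1,j_2,[1))$ has third coordinate $[1)$, not $[0)$), and on the image side the comparison between $p+kj_1$ and $ki_2=kj_1$ swings the opposite way when $p>0$, so one ends up using the other branch of~\eqref{eq-1.2}. Verifying that the resulting intersection $[1)\cap[-p)$ still evaluates to $[1)$ (which holds because $-p\leq 0\leq 1$), and the analogous check for the symmetric mixed case, completes the verification.
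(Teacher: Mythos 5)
Your proposal is correct and follows essentially the same route as the paper: the same-layer products are handled by reducing to the known form of bicyclic-monoid endomorphisms on the subsemigroups $\boldsymbol{B}_{\omega}^{\{[0)\}}$ and $\boldsymbol{B}_{\omega}^{\{[1)\}}$, and the mixed-layer products are verified directly, with the bound $0\leqslant p<k$ guaranteeing that the branch of \eqref{eq-1.2} selected on the image side agrees with the one on the source side (your displayed equivalences are exactly the case-matching the paper carries out, including the boundary $j_1=i_2$ where the intersection $[1)\cap[-p)$ collapses to $[1)$).
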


\begin{proof}
By Proposition~3 of \cite{Gutik-Mykhalenych=2020} the subsemigroups $\boldsymbol{B}_{\omega}^{\{[0)\}}$ and $\boldsymbol{B}_{\omega}^{\{[1)\}}$ are isomorphic to the bicyclic semigroup. By Lemma~1 of \cite{Gutik-Prokhorenkova-Sekh=2021} the restrictions of $\alpha_{k,p}$ onto the subsemigroups $\boldsymbol{B}_{\omega}^{\{[0)\}}$ and $\boldsymbol{B}_{\omega}^{\{[1)\}}$ are endomorphisms of $\boldsymbol{B}_{\omega}^{\{[0)\}}$ and $\boldsymbol{B}_{\omega}^{\{[1)\}}$, respectively. This implies that for all $i,j,s,t\in \omega$ the following equalities hold
\begin{align*}
  ((i,j,[0))\cdot(s,t,[0)))\alpha_{k,p}&=(i,j,[0))\alpha_{k,p}\cdot (s,t,[0))\alpha_{k,p}, \\
  ((i,j,[1))\cdot(s,t,[1)))\alpha_{k,p}&=(i,j,[1))\alpha_{k,p}\cdot (s,t,[1))\alpha_{k,p}.
\end{align*}

For any $i,j,p,q\in \omega$ we have that
\begin{align*}
  ((i,j,[0))\cdot(s,t,[1)))\alpha_{k,p}&=
  \left\{
    \begin{array}{ll}
      (i+s-j,t,(j-s+[0))\cap[1))\alpha_{k,p}, & \hbox{if~} j<s;\\
      (i,t,[0)\cap[1))\alpha_{k,p},           & \hbox{if~} j=s;\\
      (i,j+t-s,[0)\cap(s-j+[1)))\alpha_{k,p}, & \hbox{if~} j>s
    \end{array}
  \right.
   =\\
   &=
   \left\{
    \begin{array}{ll}
      (i+s-j,t,[1))\alpha_{k,p}, & \hbox{if~} j<s;\\
      (i,t,[1))\alpha_{k,p},     & \hbox{if~} j=s;\\
      (i,j+t-s,[0))\alpha_{k,p}, & \hbox{if~} j>s
    \end{array}
  \right.
   =\\
   &=
   \left\{
    \begin{array}{ll}
      (p+k(i+s-j),p+kt,[1)), & \hbox{if~} j<s;\\
      (p+ki,p+kt,[1)),       & \hbox{if~} j=s;\\
      (ki,k(j+t-s),[0)),     & \hbox{if~} j>s,
    \end{array}
  \right.
\end{align*}
\begin{align*}
  (i,j,[0))&\alpha_{k,p}\cdot (s,t,[1))\alpha_{k,p}=(ki,kj,[0))\cdot(p+ks,p+kt,[1))= \\
   &=
   \left\{
     \begin{array}{ll}
       (ki+p+ks-kj,p+kt,(kj-p-ks+[0))\cap[1)), & \hbox{if~} kj<p+ks;\\
       (ki,p+kt,[0)\cap[1)),                   & \hbox{if~} kj=p+ks;\\
       (ki,kj+p+kt-p-ks,[0)\cap(p+ks-kj+[1))), & \hbox{if~} kj>p+ks
     \end{array}
   \right.
   = \\
   &=
   \left\{
     \begin{array}{ll}
       (p+k(i+s-j),p+kt,[1)),              & \hbox{if~} kj<p+ks;\\
       (ki,p+kt,[1)),                      & \hbox{if~} kj=p+ks;\\
       (ki,k(j+t-s),[0)), & \hbox{if~} kj>p+ks
     \end{array}
   \right.
   = \\
   &=
   \left\{
     \begin{array}{ll}
       (p+k(i+s-j),p+kt,[1)), & \hbox{if~} kj<ks;\\
       (p+ki,p+kt,[1)),       & \hbox{if~} kj=ks;\\
       (ki,kt,[1)),                   & \hbox{if~} kj=p+ks \hbox{~and~} p=0;\\
       \texttt{vagueness},                   & \hbox{if~} kj=p+ks \hbox{~and~} p\neq0;\\
       (ki,k(j+t-s),[0)), & \hbox{if~} kj>ks
     \end{array}
   \right.
   = \\
   &=
   \left\{
    \begin{array}{ll}
      (p+k(i+s-j),p+kt,[1)), & \hbox{if~} j<s;\\
      (p+ki,p+kt,[1)),       & \hbox{if~} j=s;\\
      (ki,k(j+t-s),[0)),     & \hbox{if~} j>s,
    \end{array}
  \right.
\end{align*}
and
\begin{align*}
  ((i,j,[1))\cdot(s,t,[0)))\alpha_{k,p}&=
    \left\{
     \begin{array}{ll}
       (i+s-j,t,(j-s+[1))\cap[0))\alpha_{k,p}, & \hbox{if~} j<s;\\
       (i,t,[1)\cap[0))\alpha_{k,p},           & \hbox{if~} j=s;\\
       (i,j+t-s,[1)\cap(s-j+[0)))\alpha_{k,p}, & \hbox{if~} j>s
     \end{array}
   \right.
   = \\
   &=
   \left\{
     \begin{array}{ll}
       (i+s-j,t,[0))\alpha_{k,p}, & \hbox{if~} j<s;\\
       (i,t,[1))\alpha_{k,p},     & \hbox{if~} j=s;\\
       (i,j+t-s,[1))\alpha_{k,p}, & \hbox{if~} j>s
     \end{array}
   \right.
   = \\
   &=
   \left\{
     \begin{array}{ll}
       (k(i+s-j),kt,[0)),     & \hbox{if~} j<s;\\
       (p+ki,p+kt,[1)),       & \hbox{if~} j=s;\\
       (p+ki,p+k(j+t-s),[1)), & \hbox{if~} j>s,
     \end{array}
   \right.
\end{align*}
\begin{align*}
  (i,j,[1))&\alpha_{k,p}\cdot (s,t,[0))\alpha_{k,p}=(p+ki,p+kj,[1))\cdot(ks,kt,[0))= \\
   &=
   \left\{
     \begin{array}{ll}
       (p+ki+ks-p-kj,kt,(p+kj-ks+[1))\cap[0)), & \hbox{if~} p+kj<ks; \\
       (p+ki,kt,[1)\cap[0)),                   & \hbox{if~} p+kj=ks; \\
       (p+ki,p+kj+kt-ks,[1)\cap(ks-p-kj+[0))), & \hbox{if~} p+kj>ks
     \end{array}
   \right.
   = \\   
   &=
   \left\{
     \begin{array}{ll}
       (k(i+s-j),kt,[0)),     & \hbox{if~} p+kj<ks; \\
       (p+ki,kt,[1)),         & \hbox{if~} p+kj=ks; \\
       (p+ki,p+k(j+t-s),[1)), & \hbox{if~} p+kj>ks
     \end{array}
   \right.
   = 
\end{align*}
\begin{align*}   
   &=
   \left\{
     \begin{array}{ll}
       (k(i+s-j),kt,[0)),     & \hbox{if~} kj<ks; \\
       (ki,kt,[1)),           & \hbox{if~} p+kj=ks \hbox{~and~} p=0; \\
       \texttt{vagueness},    & \hbox{if~} p+kj=ks \hbox{~and~} p\neq0; \\
       (p+ki,p+kt,[1)),       & \hbox{if~} kj=ks; \\
       (p+ki,p+k(j+t-s),[1)), & \hbox{if~} kj>ks
     \end{array}
   \right.
   = \\
   &=
   \left\{
     \begin{array}{ll}
       (k(i+s-j),kt,[0)),     & \hbox{if~} j<s;\\
       (p+ki,p+kt,[1)),       & \hbox{if~} j=s;\\
       (p+ki,p+k(j+t-s),[1)), & \hbox{if~} j>s,
     \end{array}
   \right.
\end{align*}
because $p\in\{0,\ldots,k-1\}$. Thus, $\alpha_{k,p}$ is an endomorphism of the monoid $\boldsymbol{B}_{\omega}^{\mathscr{F}}$.
\end{proof}

\begin{proposition}\label{proposition-2.5}
Let $\mathscr{F}=\{[0),[1)\}$ and $\alpha$ be an injective endomorphism of the monoid $\boldsymbol{B}_{\omega}^{\mathscr{F}}$ such that
\begin{equation*}
(i,j,[0))\alpha=(ki,kj,[0))
\end{equation*}
for all $i,j\in\omega$ and for some positive integer $k$. If $(0,0,[1))\alpha\in\boldsymbol{B}_{\{[1)\}}^{\mathscr{F}}$ then there exists $p\in\{0,\ldots,k-1\}$ such that
\begin{equation*}
(i,j,[1))\alpha=(p+ki,p+kj,[1))
\end{equation*}
for all $i,j\in\omega$, i.e., $\alpha=\alpha_{k,p}$.
\end{proposition}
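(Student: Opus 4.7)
The plan is to pin down the value of $\alpha$ on $(0,0,[1))$, extend it to all of $\boldsymbol{B}_{\omega}^{\{[1)\}}$ by an algebraic decomposition, and then derive the bound $p<k$ from one well-chosen multiplicative identity. Since $(0,0,[1))$ is an idempotent of $\boldsymbol{B}_{\omega}^{\mathscr{F}}$, its image under $\alpha$ is also an idempotent; by hypothesis $(0,0,[1))\alpha\in \boldsymbol{B}_{\omega}^{\{[1)\}}$, and because this subsemigroup is isomorphic to the bicyclic monoid (Proposition~3 of \cite{Gutik-Mykhalenych=2020}) its idempotents are precisely the diagonal elements $(p,p,[1))$. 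Hence there is a unique $p\in\omega$ with $(0,0,[1))\alpha=(p,p,[1))$.

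For the second step I would verify from formula \eqref{eq-1.2} the factorization
\[
(i,j,[1))=(i,0,[0))\cdot(0,0,[1))\cdot(0,j,[0))
\]
for all $i,j\in\omega$. Applying $\alpha$ and substituting the images $(i,0,[0))\alpha=(ki,0,[0))$, $(0,j,[0))\alpha=(0,kj,[0))$ and $(0,0,[1))\alpha=(p,p,[1))$, a direct computation via \eqref{eq-1.2} (both intermediate products involve the intersection of a translated copy of $[0)$ with $[1)$, which equals $[1)$) yields
\[
(i,j,[1))\alpha=(p+ki,p+kj,[1)).
\]

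To show that $p\in\{0,\ldots,k-1\}$, I would apply $\alpha$ to the identity $(0,1,[0))\cdot(0,0,[1))=(0,1,[0))$, which is immediate from \eqref{eq-1.2}. This produces the equation $(0,k,[0))\cdot(p,p,[1))=(0,k,[0))$ in $\boldsymbol{B}_{\omega}^{\mathscr{F}}$, and computing the left-hand side from \eqref{eq-1.2} gives either $(p-k,p,[1))$ when $k\leq p$, or $(0,k,[0)\cap[p-k+1))$ when $k\geq p$. The first outcome and the equality case $p=k$ both yield an element whose third coordinate is $[1)$ rather than $[0)$, a contradiction; only $p<k$ makes $[p-k+1)$ equal to $[0)$ in $\omega$ and produces the required $(0,k,[0))$. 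Combined with the preceding paragraph this gives $\alpha=\alpha_{k,p}$. The main obstacle is precisely this last bookkeeping: only a careful analysis of the shifted intersection $[p-k+1)\cap[0)$ distinguishes $p=k-1$ from $p=k$ and selects the admissible range of the parameter; the extension step is routine once the decomposition is written down.
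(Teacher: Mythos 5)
Your proof is correct, and it takes a genuinely different and shorter route than the paper's. The paper first pins down $p\leqslant k-1$ via the natural partial order ($(1,1,[0))\preccurlyeq(0,0,[1))$ plus the fact that endomorphisms of inverse semigroups preserve $\preccurlyeq$), then works entirely inside the bicyclic copy $\boldsymbol{B}_{\omega}^{\{[1)\}}$: it introduces an unknown stretching factor $s$ via $(1,1,[1))\alpha=(p+s,p+s,[1))$, derives $(i,j,[1))\alpha=(p+si,p+sj,[1))$ from the generators $(0,1,[1))$ and $(1,0,[1))$, and finally forces $s=k$ by sandwiching $(t+1,t+1,[0))$ between idempotents of the $[1)$-layer and extracting the inequalities $p+s(t+1)\geqslant k(t+1)$ and $k(t+1)-1\geqslant p+st$ for all $t$. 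Your factorization $(i,j,[1))=(i,0,[0))\cdot(0,0,[1))\cdot(0,j,[0))$ (which does hold: both products reduce to intersections $[0)\cap[1)=[1)$) short-circuits all of this, because it transfers the already-known multiplier $k$ on the $[0)$-layer directly to the $[1)$-layer, eliminating the auxiliary parameter $s$ and the order-theoretic machinery entirely; your derivation of $p\leqslant k-1$ from $(0,1,[0))\cdot(0,0,[1))=(0,1,[0))$ is likewise a purely multiplicative substitute for the paper's order argument, and your case analysis of $[0)\cap(p-k+[1))$ is accurate. What the paper's longer route buys is uniformity with the proof of Proposition~\ref{proposition-2.8} (the $\beta_{k,p}$ case), which is organized identically; what yours buys is brevity and the observation that injectivity of $\alpha$ is not actually needed for this conclusion.
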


\begin{proof}
Suppose that $(0,0,[1))\alpha=(p,p,[1))$ for some $p\in \omega$. Since
\begin{equation*}
(1,1,[0))\alpha=(k,k,[0)), \qquad (1,1,[0))\preccurlyeq(0,0,[1))
\end{equation*}
and by Proposition 1.4.21(6) of \cite{Lawson=1998} the endomorphism $\alpha$ preserves the natural partial order, we have that $(k,k,[0))\preccurlyeq(p,p,[1))$. Hence the definition of the natural partial order on $\boldsymbol{B}_{\omega}^{\mathscr{F}}$ (see \cite[Proposition~3]{Gutik-Mykhalenych=2021}) implies that $p\leqslant k-1$.

Suppose that
$
(1,1,[1))\alpha=(p+s,p+s,[1))
$
for some positive integer $s$. Since by Proposition~3 of \cite{Gutik-Mykhalenych=2020} the subsemigroup $\boldsymbol{B}_{\omega}^{\{[1)\}}$ is isomorphic to the bicyclic semigroup, the injectivity of $\alpha$ implies that the image $(\boldsymbol{B}_{\omega}^{\{[1)\}})\alpha$ is isomorphic to the bicyclic semigroup. Put $(0,1,[1))\alpha=(x,y,[1))$. By Proposition 1.4.21 from \cite{Lawson=1998} and Lemma~4 of \cite{Gutik-Mykhalenych=2020} we get that
\begin{align*}
  (1,0,[1))\alpha&=((0,1,[1))^{-1})\alpha
   =((0,1,[1))\alpha)^{-1}
   =(x,y,[1))^{-1}
   =(y,x,[1)).
\end{align*}
This implies that
\begin{align*}
  (p,p,[1))&=(0,0,[1))\alpha=((0,1,[1))\cdot(1,0,[1)))\alpha=\\
   &=(0,1,[1))\alpha\cdot(1,0,[1))\alpha=\\
   &=(x,y,[1))\cdot(y,x,[1))=  \\
   &=(x,x,[1))
\end{align*}
and
\begin{align*}
  (p+s,p+s,[1))&=(1,1,[1))\alpha=\\
   &=((1,0,[1))\cdot(0,1,[1)))\alpha=\\
   &=(1,0,[1))\alpha\cdot(0,1,[1))\alpha= \\
   &=(y,x,[1))\cdot(x,y,[1))=\\
   &=(y,y,[1)),
\end{align*}
and hence by the definition of the semigroup $\boldsymbol{B}_{\omega}^{\mathscr{F}}$ we get that
\begin{equation*}
  (0,1,[1))\alpha=(p,p+s,[1)) \qquad \hbox{and} \qquad (1,0,[1))\alpha=(p+s,p,[1)).
\end{equation*}
Then for any $i,j\in\omega$ we have that
\begin{align*}
  (i,j,[1))\alpha&=((i,0,[1))\cdot(0,j,[1)))\alpha=\\
   &=((1,0,[1))^{i}\cdot(0,1,[1))^{j})\alpha=\\
   &=((1,0,[1))\alpha)^{i}\cdot((0,1,[1))\alpha)^{j}= \\
   &=(p+s,p,[1))^{i}\cdot(p,p+s,[1))^{j}=\\
   &=(p+si,p,[1))\cdot(p,p+sj,[1))=\\
   &=(p+si,p+sj,[1)).
\end{align*}

The definition of the natural partial order on $\boldsymbol{B}_{\omega}^{\mathscr{F}}$ (see \cite[Proposition 2]{Gutik-Mykhalenych=2021}) implies that for any positive integer $t$ we have that
\begin{equation*}
  (t+1,t+1,[1))\preccurlyeq(t+1,t+1,[0))\preccurlyeq (t,t,[1)).
\end{equation*}
Since by Proposition 1.4.21 of \cite{Lawson=1998} the endomorphism $\alpha$ preserves the natural partial order, we conclude that
\begin{equation*}
  (t+1,t+1,[1))\alpha\preccurlyeq(t+1,t+1,[0))\alpha\preccurlyeq (t,t,[1))\alpha.
\end{equation*}
This and the definition of $\alpha$ imply that
\begin{equation*}
  (p+s(t+1),p+s(t+1),[1))\preccurlyeq(k(t+1),k(t+1),[0))\preccurlyeq (p+st,p+st,[1)).
\end{equation*}
Then by the definition of the natural partial order on $\boldsymbol{B}_{\omega}^{\mathscr{F}}$ (see \cite[Proposition 3]{Gutik-Mykhalenych=2021}) we obtain that \begin{equation}\label{eq-2.1}
  p+s(t+1)\geqslant k(t+1) \qquad \hbox{and} \qquad k(t+1)-1\geqslant p+st
\end{equation}
for any positive integer $t$.

We claim that $s=k$. Indeed, if $s<k$ then the first inequality of \eqref{eq-2.1} implies that
\begin{equation*}
  p+(s-k)(t+1)\geqslant 0 \qquad \hbox{for all positive integers} \quad t.
\end{equation*}
But the last inequality is incorrect because $s-k<0$. Similarly, if $s>k$ then the second inequality of \eqref{eq-2.1} implies that
\begin{equation*}
  (k-s)t+k-1-p\geqslant 0 \qquad \hbox{for all positive integers} \quad t.
\end{equation*}
But the last inequality is incorrect because $k-s<0$. Then $\alpha=\alpha_{k,p}$. By Lemma~\ref{lemma-2.4}, the map $\alpha_{k,p}$ is an endomorphism of the monoid $\boldsymbol{B}_{\omega}^{\mathscr{F}}$.
\end{proof}

\begin{example}\label{example-2.6}
Let $\mathscr{F}=\{[0),[1)\}$. Fix an arbitrary positive integer $k\geqslant 2$ and any $p\in\{1,\ldots,k-1\}$. For all $i,j\in\omega$ we define the transformation $\beta_{k,p}$  of the semigroup $\boldsymbol{B}_{\omega}^{\mathscr{F}}$ in the following way
\begin{align*}
  (i,j,[0))\beta_{k,p}&=(ki,kj,[0)), \\
  (i,j,[1))\beta_{k,p}&=(p+ki,p+kj,[0)).
\end{align*}
It is obvious that $\beta_{k,p}$ is an injective transformation of the monoid $\boldsymbol{B}_{\omega}^{\mathscr{F}}$.
\end{example}

\begin{lemma}\label{lemma-2.7}
Let $\mathscr{F}=\{[0),[1)\}$. Then for an arbitrary positive integer $k\geqslant 2$ and any $p\in\{1,\ldots,k-1\}$ the map $\beta_{k,p}$ is an injective endomorphism of the monoid $\boldsymbol{B}_{\omega}^{\mathscr{F}}$.
\end{lemma}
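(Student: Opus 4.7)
The plan is to verify the two claimed properties of $\beta_{k,p}$ separately: injectivity first, then the homomorphism property.

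For injectivity, observe that the image of $\beta_{k,p}$ is contained in $\boldsymbol{B}_{\omega}^{\{[0)\}}$, so it suffices to check that the two images $(ki,kj,[0))$ and $(p+kl,p+km,[0))$ never coincide for $(i,j)\neq(l,m)$ or coming from different branches. Within each branch injectivity is immediate since $k\geqslant 1$. An equality $(ki,kj,[0))=(p+kl,p+km,[0))$ across branches would force $p=k(i-l)$, contradicting $1\leqslant p\leqslant k-1$.

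The cleanest route to the homomorphism property is to factor $\beta_{k,p}=\alpha_{k,p}\circ\pi$, where $\pi\colon \boldsymbol{B}_{\omega}^{\mathscr{F}}\to\boldsymbol{B}_{\omega}^{\mathscr{F}}$ is the ``collapsing'' map $(i,j,[0))\pi=(i,j,[0))$, $(i,j,[1))\pi=(i,j,[0))$. The identity $\beta_{k,p}=\alpha_{k,p}\circ\pi$ is an immediate check on the two element types. It remains to verify that $\pi$ is itself a (monoid) endomorphism; this is a short case check on the four product types using formula~\eqref{eq-1.2}, and it works because every intersection $[a)\cap[b)$ appearing in the product is again of the form $[c)$, and after applying $\pi$ the third coordinate is forgotten. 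Then $\beta_{k,p}$, being a composition of two endomorphisms (with $\alpha_{k,p}$ provided by Lemma~\ref{lemma-2.4}), is an endomorphism.

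As an alternative, one can mirror the direct case analysis of Lemma~\ref{lemma-2.4}: since $(i,j,[0))\beta_{k,p}$ and $(i,j,[1))\beta_{k,p}$ both land in $\boldsymbol{B}_{\omega}^{\{[0)\}}$, all four mixed products reduce to multiplication inside $\boldsymbol{B}_{\omega}^{\{[0)\}}$, and the comparisons in the two branches of \eqref{eq-1.2} are controlled by $j\lessgtr s$ on the left and by $kj\lessgtr p+ks$ on the right, which agree up to the boundary $kj=p+ks$. I expect the only real obstacle to be the boundary case $kj=p+ks$: in the proof of Lemma~\ref{lemma-2.4} this produced the labelled ``vagueness'' that had to be absorbed using $0\leqslant p\leqslant k-1$. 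Here, however, both sides of the purported equality already lie at the $[0)$-level from the outset, so the third-coordinate ambiguity is vacuous and the boundary case collapses to the ``equal'' branch, yielding equality without the delicate argument from Lemma~\ref{lemma-2.4}. This is exactly why the hypothesis $p\in\{1,\ldots,k-1\}$ (rather than $p\in\{0,\ldots,k-1\}$) is harmless: the value $p=0$ is excluded only to avoid recovering $\beta_{k,0}=\alpha_{k,0}\circ\pi$, a genuinely different map from $\alpha_{k,0}$ but arising from the same data.
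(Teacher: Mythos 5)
Your main argument is correct but follows a genuinely different route from the paper. The paper proves the lemma by brute force: it handles the two ``pure'' products via Lemma~1 of \cite{Gutik-Prokhorenkova-Sekh=2021} and then expands all four mixed products through \eqref{eq-1.2}, disposing of the troublesome boundary case $kj=p+ks$ by the observation that it cannot occur when $1\leqslant p\leqslant k-1$. Your factorization $\beta_{k,p}=\alpha_{k,p}\pi$ (first $\alpha_{k,p}$, then the collapsing map $\pi$) replaces all of that with two short observations: $\pi$ is an endomorphism because the first two coordinates of a product in \eqref{eq-1.2} do not depend on the third coordinates of the factors, and because $(j_1-i_2+[0))\cap[0)$ and $[0)\cap(i_2-j_1+[0))$ both equal $[0)$ in the respective branches; and a composition of endomorphisms is an endomorphism, with $\alpha_{k,p}$ supplied by Lemma~\ref{lemma-2.4}. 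This is cleaner than the paper's computation and I consider it complete. Your injectivity check is also fine, and it is exactly where $p\geqslant 1$ enters.

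Two side remarks in your proposal are wrong, though neither damages the main argument. First, in the sketched alternative you claim the boundary case $kj=p+ks$ ``collapses to the equal branch, yielding equality''; it does not. If that case could occur, the product of the images would be $(ki,p+kt,[0))$, while the image of the product in the case $j=s$ is $(p+ki,p+kt,[0))$ --- a genuine mismatch in the first coordinate, not a vacuous third-coordinate ambiguity. The boundary case is harmless only because it is empty: $kj=p+ks$ forces $k$ to divide $p$, which is impossible for $1\leqslant p\leqslant k-1$. This is precisely the paper's ``vagueness'' bookkeeping, and it is no less delicate for $\beta_{k,p}$ than for $\alpha_{k,p}$. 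Second, $p=0$ is not excluded merely to avoid duplicating data already present in $\alpha_{k,0}$: the map $\beta_{k,0}=\alpha_{k,0}\pi$ sends both $(i,j,[0))$ and $(i,j,[1))$ to $(ki,kj,[0))$ and therefore fails injectivity, which is part of what the lemma asserts.
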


\begin{proof}
By Proposition~3 of \cite{Gutik-Mykhalenych=2020} the subsemigroups $\boldsymbol{B}_{\omega}^{\{[0)\}}$ and $\boldsymbol{B}_{\omega}^{\{[1)\}}$ are isomorphic to the bicyclic semigroup. By Lemma~1 of \cite{Gutik-Prokhorenkova-Sekh=2021}, the restriction of $\beta_{k,p}$ onto the subsemigroup $\boldsymbol{B}_{\omega}^{\{[0)\}}$ is an endomorphisms of $\boldsymbol{B}_{\omega}^{\{[0)\}}$, and the restriction of $\beta_{k,p}$ onto the subsemigroup $\boldsymbol{B}_{\omega}^{\{[1)\}}$ is a homomorphisms of $\boldsymbol{B}_{\omega}^{\{[1)\}}$ into $\boldsymbol{B}_{\omega}^{\{[0)\}}$. This implies that for all $i,j,s,t\in \omega$ the following equalities hold
\begin{align*}
  ((i,j,[0))\cdot(s,t,[0)))\beta_{k,p}&=(i,j,[0))\beta_{k,p}\cdot (s,t,[0))\beta_{k,p}, \\
  ((i,j,[1))\cdot(s,t,[1)))\beta_{k,p}&=(i,j,[1))\beta_{k,p}\cdot (s,t,[1))\beta_{k,p}.
\end{align*}

For any $i,j,p,q\in \omega$ we have that
\begin{align*}
  ((i,j,[0))\cdot(s,t,[1)))\beta_{k,p}&=
  \left\{
    \begin{array}{ll}
      (i+s-j,t,(j-s+[0))\cap[1))\beta_{k,p}, & \hbox{if~} j<s;\\
      (i,t,[0)\cap[1))\beta_{k,p},           & \hbox{if~} j=s;\\
      (i,j+t-s,[0)\cap(s-j+[1)))\beta_{k,p}, & \hbox{if~} j>s
    \end{array}
  \right.
   =\\
   &=
   \left\{
    \begin{array}{ll}
      (i+s-j,t,[1))\beta_{k,p}, & \hbox{if~} j<s;\\
      (i,t,[1))\beta_{k,p},     & \hbox{if~} j=s;\\
      (i,j+t-s,[0))\beta_{k,p}, & \hbox{if~} j>s
    \end{array}
  \right.
   =\\
   &=
   \left\{
    \begin{array}{ll}
      (p+k(i+s-j),p+kt,[0)), & \hbox{if~} j<s;\\
      (p+ki,p+kt,[0)),       & \hbox{if~} j=s;\\
      (ki,k(j+t-s),[0)),     & \hbox{if~} j>s,
    \end{array}
  \right.
\end{align*}
\begin{align*}
  (i,j,[0))&\beta_{k,p}\cdot (s,t,[1))\beta_{k,p}=(ki,kj,[0))\cdot(p+ks,p+kt,[0))= \\
   &=
   \left\{
     \begin{array}{ll}
       (ki+p+ks-kj,p+kt,(kj-p-ks+[0))\cap[0)), & \hbox{if~} kj<p+ks;\\
       (ki,p+kt,[0)\cap[0)),                   & \hbox{if~} kj=p+ks;\\
       (ki,kj+p+kt-p-ks,[0)\cap(p+ks-kj+[0))), & \hbox{if~} kj>p+ks
     \end{array}
   \right.
   = \\
   &=
   \left\{
     \begin{array}{ll}
       (p+k(i+s-j),p+kt,[0)),              & \hbox{if~} kj<p+ks;\\
       (ki,p+kt,[0)),                      & \hbox{if~} kj=p+ks;\\
       (ki,k(j+t-s),[0)),                  & \hbox{if~} kj>p+ks
     \end{array}
   \right.
   = \\
   &=
   \left\{
     \begin{array}{ll}
       (p+k(i+s-j),p+kt,[0)), & \hbox{if~} kj<ks;\\
       (p+ki,p+kt,[0)),       & \hbox{if~} kj=ks;\\
       (ki,kt,[0)),           & \hbox{if~} kj=p+ks \hbox{~and~} p=0;\\
       \texttt{vagueness},    & \hbox{if~} kj=p+ks \hbox{~and~} p\neq0;\\
       (ki,k(j+t-s),[0))),    & \hbox{if~} kj>ks
     \end{array}
   \right.
   = \\
   &=
   \left\{
    \begin{array}{ll}
      (p+k(i+s-j),p+kt,[0)), & \hbox{if~} j<s;\\
      (p+ki,p+kt,[0)),       & \hbox{if~} j=s;\\
      (ki,k(j+t-s),[0)),     & \hbox{if~} j>s,
    \end{array}
  \right.
\end{align*}
and
\begin{align*}
  ((i,j,[1))\cdot(s,t,[0)))\beta_{k,p}&=
    \left\{
     \begin{array}{ll}
       (i+s-j,t,(j-s+[1)\cap[0))\beta_{k,p},  & \hbox{if~} j<s;\\
       (i,t,[1)\cap[0))\beta_{k,p},           & \hbox{if~} j=s;\\
       (i,j+t-s,[1)\cap(s-j+[0)))\beta_{k,p}, & \hbox{if~} j>s
     \end{array}
   \right.
   = \\
   &=
   \left\{
     \begin{array}{ll}
       (i+s-j,t,[0))\beta_{k,p}, & \hbox{if~} j<s;\\
       (i,t,[1))\beta_{k,p},     & \hbox{if~} j=s;\\
       (i,j+t-s,[1))\beta_{k,p}, & \hbox{if~} j>s
     \end{array}
   \right.
   = \\
   &=
   \left\{
     \begin{array}{ll}
       (k(i+s-j),kt,[0)),     & \hbox{if~} j<s;\\
       (p+ki,p+kt,[0)),       & \hbox{if~} j=s;\\
       (p+ki,p+k(j+t-s),[0)), & \hbox{if~} j>s,
     \end{array}
   \right.
\end{align*}
\begin{align*}
  (i,j,[1))&\beta_{k,p}\cdot (s,t,[0))\beta_{k,p}=(p+ki,p+kj,[0))\cdot(ks,kt,[0))= \\
   &=
   \left\{
     \begin{array}{ll}
       (p+ki+ks-p-kj,kt,(p+kj-ks+[0))\cap[0)), & \hbox{if~} p+kj<ks; \\
       (p+ki,kt,[0)\cap[0)),                   & \hbox{if~} p+kj=ks; \\
       (p+ki,p+kj+kt-ks,[0)\cap(ks-p-kj+[0))), & \hbox{if~} p+kj>ks
     \end{array}
   \right.
   = \\
   &=
   \left\{
     \begin{array}{ll}
       (k(i+s-j),kt,[0)),     & \hbox{if~} p+kj<ks; \\
       (p+ki,kt,[0)),         & \hbox{if~} p+kj=ks; \\
       (p+ki,p+k(j+t-s),[0)), & \hbox{if~} p+kj>ks
     \end{array}
   \right.
   = 
\end{align*}
\begin{align*}   
   &=
   \left\{
     \begin{array}{ll}
       (k(i+s-j),kt,[0)),     & \hbox{if~} kj<ks; \\
       (ki,kt,[0)),           & \hbox{if~} p+kj=ks \hbox{~and~} p=0; \\
       \texttt{vagueness},    & \hbox{if~} p+kj=ks \hbox{~and~} p\neq0; \\
       (p+ki,p+kt,[0)),       & \hbox{if~} kj=ks; \\
       (p+ki,p+k(j+t-s),[0)), & \hbox{if~} kj>ks
     \end{array}
   \right.
   = \\
   &=
   \left\{
     \begin{array}{ll}
       (k(i+s-j),kt,[0)),     & \hbox{if~} j<s;\\
       (p+ki,p+kt,[0)),       & \hbox{if~} j=s;\\
       (p+ki,p+k(j+t-s),[0)), & \hbox{if~} j>s,
     \end{array}
   \right.
\end{align*}
because $p\in\{1,\ldots,k-1\}$. Thus, $\beta_{k,p}$ is an endomorphism of the monoid $\boldsymbol{B}_{\omega}^{\mathscr{F}}$.
\end{proof}

\begin{proposition}\label{proposition-2.8}
Let $\mathscr{F}=\{[0),[1)\}$ and $\beta$ be an injective endomorphism of the monoid $\boldsymbol{B}_{\omega}^{\mathscr{F}}$ such that
\begin{equation*}
(i,j,[0))\beta=(ki,kj,[0))
\end{equation*}
for all $i,j\in\omega$ and for some positive integer $k\geqslant 2$. If $(0,0,[1))\beta\in\boldsymbol{B}^{\{[0)\}}_{\omega}$ then there exists $p\in\{1,\ldots,k-1\}$ such that
\begin{equation*}
(i,j,[1))\beta=(p+ki,p+kj,[0))
\end{equation*}
for all $i,j\in\omega$, i.e., $\beta=\beta_{k,p}$.
\end{proposition}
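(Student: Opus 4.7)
The plan is to mirror the proof of Proposition~\ref{proposition-2.5}, with two modifications forced by the assumption $(0,0,[1))\beta\in\boldsymbol{B}^{\{[0)\}}_{\omega}$. First I would write $(0,0,[1))\beta=(p,p,[0))$ for some $p\in\omega$; the form is forced because an idempotent maps to an idempotent. Applying the natural-partial-order preservation of $\beta$ (Proposition~1.4.21(6) of \cite{Lawson=1998}) to $(1,1,[0))\preccurlyeq(0,0,[1))$ yields $(k,k,[0))\preccurlyeq(p,p,[0))$, so $p\leqslant k$. Unlike in Proposition~\ref{proposition-2.5}, this bound is not quite sharp, so I would sharpen it by injectivity: $p=0$ would give $(0,0,[1))\beta=(0,0,[0))\beta$, and $p=k$ would give $(0,0,[1))\beta=(1,1,[0))\beta$, both contradicting injectivity. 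Thus $p\in\{1,\ldots,k-1\}$, and the hypothesis $k\geqslant 2$ is used here to ensure this set is nonempty.

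Next I would determine $(0,1,[1))\beta$ and $(1,0,[1))\beta$. Setting $(0,1,[1))\beta=(x,y,G)$ and using that $\beta$ respects inversion on the inverse submonoid $\boldsymbol{B}_{\omega}^{\{[1)\}}$ (Proposition~1.4.21 of \cite{Lawson=1998} together with Lemma~4 of \cite{Gutik-Mykhalenych=2020}), I obtain $(1,0,[1))\beta=(y,x,G)$. The calculation $(0,0,[1))\beta=((0,1,[1))\cdot(1,0,[1)))\beta=(x,y,G)\cdot(y,x,G)=(x,x,G)$ matched with $(p,p,[0))$ then forces $x=p$ and, crucially, $G=[0)$; the analogous computation for $(1,1,[1))\beta$ yields $(1,1,[1))\beta=(y,y,[0))$. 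Writing $y=p+s$, and noting that $(1,1,[1))\beta\preccurlyeq(1,1,[0))\beta=(k,k,[0))$ shows $s\geqslant k-p\geqslant 1$. Decomposing $(i,j,[1))=(1,0,[1))^{i}\cdot(0,1,[1))^{j}$ and computing $(p+s,p,[0))^{i}\cdot(p,p+s,[0))^{j}$ (a routine application of~\eqref{eq-1.2}) then yields $(i,j,[1))\beta=(p+si,p+sj,[0))$ for all $i,j\in\omega$.

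It remains to show that $s=k$, exactly as at the end of Proposition~\ref{proposition-2.5}. The chain $(t+1,t+1,[1))\preccurlyeq(t+1,t+1,[0))\preccurlyeq(t,t,[1))$ is preserved by $\beta$, producing $(p+s(t+1),p+s(t+1),[0))\preccurlyeq(k(t+1),k(t+1),[0))\preccurlyeq(p+st,p+st,[0))$ for every $t\in\mathbb{N}$, which by the description of the natural partial order on $\boldsymbol{B}_{\omega}^{\mathscr{F}}$ (\cite[Proposition~3]{Gutik-Mykhalenych=2021}) translates to $p+s(t+1)\geqslant k(t+1)\geqslant p+st$ for all such $t$. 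Letting $t\to\infty$ rules out $s<k$ and $s>k$ respectively, hence $s=k$ and $\beta=\beta_{k,p}$, which is an endomorphism by Lemma~\ref{lemma-2.7}.

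The main obstacle I expect is identifying the third coordinate of $(1,1,[1))\beta$ as $[0)$ rather than $[1)$: this does not follow directly from the hypothesis on $(0,0,[1))\beta$ alone, and is extracted only by combining the inverse-compatibility argument with the product identity $(0,0,[1))=(0,1,[1))\cdot(1,0,[1))$, which pins down $x$ and $G$ simultaneously and is where the hypothesis $(0,0,[1))\beta\in\boldsymbol{B}^{\{[0)\}}_{\omega}$ is used in an essential way.
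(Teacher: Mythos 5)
Your proof is correct and follows essentially the same route as the paper's: write $(0,0,[1))\beta=(p,p,[0))$, bound $p$ via order preservation and injectivity, use inversion-compatibility and the factorization $(i,j,[1))=(1,0,[1))^{i}\cdot(0,1,[1))^{j}$ to get $(i,j,[1))\beta=(p+si,p+sj,[0))$, and then squeeze $s=k$ from the chain $(t+1,t+1,[1))\preccurlyeq(t+1,t+1,[0))\preccurlyeq(t,t,[1))$. The only (harmless, arguably cleaner) deviations are that you pin down the third coordinate $[0)$ via the product identity $(x,y,G)\cdot(y,x,G)=(x,x,G)$ rather than by noting that the image of the bicyclic submonoid $\boldsymbol{B}_{\omega}^{\{[1)\}}$ must lie in a single $\boldsymbol{B}_{\omega}^{\{[s)\}}$, and that you reach $p\in\{1,\ldots,k-1\}$ by excluding $p=0$ and $p=k$ through injectivity instead of reading $p\leqslant k-1$ directly off the cited description of the natural partial order.
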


\begin{proof}
Suppose that $(0,0,[1))\beta=(p,p,[0))$ for some $p\in \omega$. By assumption
$
(0,0,[0))\beta=(0,0,[0))
$
and hence $p\neq 1$. Since
$(1,1,[0))\beta=(k,k,[0))$, 
$(1,1,[0))\preccurlyeq(0,0,[1))$
and by Proposition 1.4.21(6) of \cite{Lawson=1998} the endomorphism $\beta$ preserves the natural partial order, we conclude that $(k,k,[0))\preccurlyeq(p,p,[0))$. Hence the definition of the natural partial order on the monoid $\boldsymbol{B}_{\omega}^{\mathscr{F}}$ (see \cite[Proposition 3]{Gutik-Mykhalenych=2021}) implies that $p\leqslant k-1$.

Suppose that
$
(1,1,[1))\beta=(p+s,p+s,[0))
$
for some positive integer $s$. Since by Proposition~3 of \cite{Gutik-Mykhalenych=2020} the subsemigroup $\boldsymbol{B}_{\omega}^{\{[1)\}}$ is isomorphic to the bicyclic semigroup, the injectivity of $\beta$ implies that the image $(\boldsymbol{B}_{\omega}^{\{[1)\}})\beta$ is isomorphic to the bicyclic semigroup. Put $(0,1,[1))\beta=(x,y,[0))$. By Proposition 1.4.21 from \cite{Lawson=1998} and Lemma~4 of \cite{Gutik-Mykhalenych=2020} we get that
\begin{align*}
 (1,0,[1))\beta&=((0,1,[1))^{-1})\beta
   =((0,1,[1))\beta)^{-1}
   =(x,y,[0))^{-1}
   =(y,x,[0)).
\end{align*}
This implies that
\begin{align*}
  (p,p,[0))&=(0,0,[1))\beta=((0,1,[1))\cdot(1,0,[1)))\beta=\\
   &=(0,1,[1))\beta\cdot(1,0,[1))\beta=\\
   &=(x,y,[0))\cdot(y,x,[0))=  \\
   &=(x,x,[0))
\end{align*}
and
\begin{align*}
  (p+s,p+s,[0))&=(1,1,[1))\beta=\\
   &=((1,0,[1))\cdot(0,1,[1)))\beta=\\
   &=(1,0,[1))\beta\cdot(0,1,[1))\beta= \\
   &=(y,x,[0))\cdot(x,y,[0))=\\
   &=(y,y,[0)),
\end{align*}
and hence by the definition of the semigroup $\boldsymbol{B}_{\omega}^{\mathscr{F}}$ we get that
\begin{equation*}
  (0,1,[1))\beta=(p,p+s,[0)) \qquad \hbox{and} \qquad (1,0,[1))\beta=(p+s,p,[0)).
\end{equation*}
Then for any $i,j\in\omega$ we have that
\begin{align*}
  (i,j,[1))\beta&=((i,0,[1))\cdot(0,j,[1)))\beta=\\
   &=((1,0,[1))^{i}\cdot(0,1,[1))^{j})\beta=\\
   &=((1,0,[1))\beta)^{i}\cdot((0,1,[1))\beta)^{j}= \\
   &=(p+s,p,[0))^{i}\cdot(p,p+s,[0))^{j}=\\
   &=(p+si,p,[0))\cdot(p,p+sj,[0))=\\
   &=(p+si,p+sj,[0)).
\end{align*}

The definition of the natural partial order on $\boldsymbol{B}_{\omega}^{\mathscr{F}}$ (see \cite[Proposition 2]{Gutik-Mykhalenych=2021}) implies that for any positive integer $t$ we have that
\begin{equation*}
  (t+1,t+1,[1))\preccurlyeq(t+1,t+1,[0))\preccurlyeq (t,t,[1)).
\end{equation*}
Since by Proposition 1.4.21 of \cite{Lawson=1998} the endomorphism $\alpha$ preserves the natural partial order, we conclude that
\begin{equation*}
  (t+1,t+1,[1))\beta\preccurlyeq(t+1,t+1,[0))\beta\preccurlyeq (t,t,[1))\beta,
\end{equation*}
which  implies that
\begin{equation*}
  (p+s(t+1),p+s(t+1),[0))\preccurlyeq(k(t+1),k(t+1),[0))\preccurlyeq (p+st,p+st,[0)).
\end{equation*}
Then by the definition of the natural partial order on $\boldsymbol{B}_{\omega}^{\mathscr{F}}$ (see \cite[Proposition 3]{Gutik-Mykhalenych=2021}) we obtain that \begin{equation}\label{eq-2.2}
  p+s(t+1)\geqslant k(t+1)\geqslant p+st
\end{equation}
for any positive integer $t$.

We claim that $s=k$. Indeed, if $s<k$ then the first inequality of \eqref{eq-2.2} implies that
\begin{equation*}
  p+(s-k)(t+1)\geqslant 0 \qquad \hbox{for all positive integers} \quad t.
\end{equation*}
But the last inequality is incorrect because $s-k<0$. Similar, if $s>k$ then the second inequality of \eqref{eq-2.2} implies that
\begin{equation*}
  (k-s)t+k-p\geqslant 0 \qquad \hbox{for all positive integers} \quad t.
\end{equation*}
But the last inequality is incorrect because $k-s<0$.
Hence $\beta=\beta_{k,p}$. By Lemma~\ref{lemma-2.7}, the map $\beta_{k,p}$ is an endomorphism of the monoid $\boldsymbol{B}_{\omega}^{\mathscr{F}}$.
\end{proof}

In the following theorem we describe the elements of the semigroup $\boldsymbol{End}^1_*(\boldsymbol{B}_{\omega}^{\mathscr{F}})$ of all injective monoid endomorphisms of the monoid $\boldsymbol{B}_{\omega}^{\mathscr{F}}$ for the family $\mathscr{F}=\{[0),[1)\}$.

\begin{theorem}\label{theorem-2.9}
Let $\mathscr{F}=\{[0),[1)\}$ and $\varepsilon$ be an injective monoid endomorphism of  $\boldsymbol{B}_{\omega}^{\mathscr{F}}$. Then either there exist a positive integer $k$ and $p\in\{0,\ldots,k-1\}$ such that $\varepsilon=\alpha_{k,p}$ or there exist a positive integer $k\geqslant 2$ and $p\in\{1,\ldots,k-1\}$ such that $\varepsilon=\beta_{k,p}$. Moreover. the following statements hold:
\begin{enumerate}
  \item\label{theorem-2.9(1)} $(i,j,[0))\alpha_{k,p}=(i,j,[0))\beta_{k,p}$ for all $i,j\in\omega$ and any positive integer $k\geqslant 2$ and $p\in\{1,\ldots,k-1\}$;
  \item\label{theorem-2.9(2)} if $k=1$ then $\varepsilon=\alpha_{1,0}$ is an automorphism of the monoid $\boldsymbol{B}_{\omega}^{\mathscr{F}}$ which is the identity selfmap of $\boldsymbol{B}_{\omega}^{\mathscr{F}}$;
  \item\label{theorem-2.9(3)} $\alpha_{k_1,p_1}\alpha_{k_2,p_2}=\alpha_{k_1k_2,p_2+k_2p_1}$ for all positive integers $k_1, k_2$, any $p_1{\in}\{0,\ldots,k_1{-}1\}$, and  any $p_2\in\{0,\ldots,k_2-1\}$;
  \item\label{theorem-2.9(4)} $\alpha_{k_1,p_1}\beta_{k_2,p_2}=\beta_{k_1k_2,p_2+k_2p_1}$ for all positive integers $k_1$ and $k_2\geqslant 2$, any $p_1\in\{0,\ldots,k_1-1\}$, and  any $p_2\in\{1,\ldots,k_2-1\}$;
  \item\label{theorem-2.9(5)} $\beta_{k_1,p_1}\beta_{k_2,p_2}=\beta_{k_1k_2,k_2p_1}$ for all positive integers $k_1,k_2\geqslant 2$, any $p_1{\in}\{1,\ldots,k_1{-}1\}$, and  any $p_2\in\{1,\ldots,k_2-1\}$;
  \item\label{theorem-2.9(6)} $\beta_{k_1,p_1}\alpha_{k_2,p_2}=\beta_{k_1k_2,k_2p_1}$ for all positive integers $k_1\geqslant 2$ and $k_2$, any $p_1\in\{1,\ldots,k_1-1\}$, and  any $p_2\in\{0,\ldots,k_2-1\}$;
  \item\label{theorem-2.9(7)} if $\alpha_{k_2,p_2}$, $\beta_{k_1,p_1}$, and $\beta_{k_2,p_2}$ are well defined elements of $\boldsymbol{End}^1_*(\boldsymbol{B}_{\omega}^{\mathscr{F}})$ then $\beta_{k_1,p_1}\alpha_{k_2,p_2}=\beta_{k_1,p_1}\beta_{k_2,p_2}$;
  \item\label{theorem-2.9(8)} $\alpha_{1,0}$ is the unique idempotent of $\boldsymbol{End}^1_*(\boldsymbol{B}_{\omega}^{\mathscr{F}})$, and moreover $\alpha_{1,0}$ is the identity of $\boldsymbol{End}^1_*(\boldsymbol{B}_{\omega}^{\mathscr{F}})$;
  \item\label{theorem-2.9(9)} $S_{\alpha}=\langle\alpha_{k,p}\mid k\in \mathbb{N}, p\in\{0,\ldots,k-1\}\rangle$ is a cancellative submonoid of $\boldsymbol{End}^1_*(\boldsymbol{B}_{\omega}^{\mathscr{F}})$;
  \item\label{theorem-2.9(10)} $S_{\beta}=\langle\beta_{k,p}\mid k=2,3,\ldots, p\in\{1,\ldots,k-1\}\rangle$ is an ideal in $\boldsymbol{End}^1_*(\boldsymbol{B}_{\omega}^{\mathscr{F}})$.
\end{enumerate}
\end{theorem}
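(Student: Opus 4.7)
The plan is to piece together Lemma~\ref{lemma-2.2} and Propositions~\ref{proposition-2.5} and~\ref{proposition-2.8} for the classification, and then to verify the algebraic assertions (\ref{theorem-2.9(1)})--(\ref{theorem-2.9(10)}) by direct calculation.

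First, I would start from an arbitrary injective monoid endomorphism $\varepsilon$. Since $\varepsilon$ preserves the monoid identity, $(0,0,[0))\varepsilon=(0,0,[0))$, and Lemma~\ref{lemma-2.2} immediately yields a positive integer $k$ such that $(i,j,[0))\varepsilon=(ki,kj,[0))$ for all $i,j\in\omega$. To select which of the two descriptions applies, I would examine $(0,0,[1))\varepsilon$. Because $(0,0,[1))$ is an idempotent and every idempotent of $\boldsymbol{B}_{\omega}^{\mathscr{F}}$ has the form $(n,n,F)$ with $n\in F$ and $F\in\mathscr{F}$, the image $(0,0,[1))\varepsilon$ lies either in $\boldsymbol{B}_\omega^{\{[0)\}}$ or in $\boldsymbol{B}_\omega^{\{[1)\}}$. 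In the second case Proposition~\ref{proposition-2.5} delivers $\varepsilon=\alpha_{k,p}$ for some $p\in\{0,\ldots,k-1\}$. In the first case Proposition~\ref{proposition-2.8} delivers $\varepsilon=\beta_{k,p}$ for some $p\in\{1,\ldots,k-1\}$; here $k\geqslant 2$ is forced because $k=1$ would compel $p=0$, collapsing $(0,0,[1))\varepsilon$ onto $(0,0,[0))\varepsilon$ and violating injectivity.

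Statements~(\ref{theorem-2.9(1)}) and~(\ref{theorem-2.9(2)}) can be read off from the defining formulas in Examples~\ref{example-2.3} and~\ref{example-2.6}: both $\alpha_{k,p}$ and $\beta_{k,p}$ send $(i,j,[0))$ to $(ki,kj,[0))$, and the choice $k=1$, $p=0$ makes $\alpha_{1,0}$ the identity selfmap. Items~(\ref{theorem-2.9(3)})--(\ref{theorem-2.9(6)}) each reduce to one short pair of routine computations; for instance, for (\ref{theorem-2.9(3)}) the successive application of $\alpha_{k_1,p_1}$ and $\alpha_{k_2,p_2}$ sends $(i,j,[1))$ to $(p_2+k_2(p_1+k_1 i),p_2+k_2(p_1+k_1 j),[1))=(p_2+k_2p_1+k_1k_2 i,p_2+k_2p_1+k_1k_2 j,[1))$, matching $\alpha_{k_1 k_2,p_2+k_2 p_1}$, and the calculation on $(i,j,[0))$ is entirely analogous. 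Statement~(\ref{theorem-2.9(7)}) is then immediate, since (\ref{theorem-2.9(5)}) and (\ref{theorem-2.9(6)}) both return $\beta_{k_1 k_2,k_2 p_1}$.

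For (\ref{theorem-2.9(8)}), reading the formulas (\ref{theorem-2.9(3)}) and (\ref{theorem-2.9(5)}) I would note that $\alpha_{k,p}\alpha_{k,p}=\alpha_{k^2,p+kp}$ coincides with $\alpha_{k,p}$ only if $k^2=k$, which forces $k=1$ and then $p=0$; no $\beta_{k,p}$ can be idempotent because $k\geqslant 2$ makes $k^2\neq k$. For (\ref{theorem-2.9(9)}), closure of $S_{\alpha}$ under composition is (\ref{theorem-2.9(3)}), $\alpha_{1,0}\in S_\alpha$ supplies the identity, and cancellativity follows by solving for $k$ and $p$ uniquely from the formula $\alpha_{k_1,p_1}\alpha_{k_2,p_2}=\alpha_{k_1k_2,p_2+k_2p_1}$ on either side. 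For (\ref{theorem-2.9(10)}), statements (\ref{theorem-2.9(4)}), (\ref{theorem-2.9(5)}) and (\ref{theorem-2.9(6)}) show that composing any element of $\boldsymbol{End}^1_*(\boldsymbol{B}_{\omega}^{\mathscr{F}})$ with an element of $S_\beta$ on either side yields another element of $S_\beta$. The only substantive step is the dichotomy itself: once one confirms that $(0,0,[1))\varepsilon$ must be an idempotent in $\boldsymbol{B}_\omega^{\{[0)\}}\cup\boldsymbol{B}_\omega^{\{[1)\}}$, the theorem reduces to bookkeeping with the two composition tables.
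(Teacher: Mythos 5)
Your proposal is correct and follows essentially the same route as the paper: Lemma~\ref{lemma-2.2} plus the dichotomy on where $(0,0,[1))\varepsilon$ lands feeds into Propositions~\ref{proposition-2.5} and~\ref{proposition-2.8}, and items \eqref{theorem-2.9(1)}--\eqref{theorem-2.9(10)} are routine consequences of the composition formulas (your direct observation that $\alpha_{1,0}$ is literally the identity map, and your explicit ruling out of $k=1$ in the $\beta$-case, are if anything slightly cleaner than the paper's citations). One trivial slip: idempotents of $\boldsymbol{B}_{\omega}^{\mathscr{F}}$ are the elements $(n,n,F)$ with $F\in\mathscr{F}$, without the condition $n\in F$ (e.g.\ $(0,0,[1))$ itself), but this does not affect your argument.
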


\begin{proof}
Since by Proposition~3 of \cite{Gutik-Mykhalenych=2020} the subsemigroup $\boldsymbol{B}_{\omega}^{\{[0)\}}$ is isomorphic to the bicyclic semigroup, the injectivity of $\varepsilon$ implies that the image $(\boldsymbol{B}_{\omega}^{\{[0)\}})\varepsilon$ is isomorphic to the bicyclic semigroup. The condition $(0,0,[0))\varepsilon=(0,0,[0))$ and Proposition~4 of \cite{Gutik-Mykhalenych=2021} imply that $(\boldsymbol{B}_{\omega}^{\{[0)\}})\varepsilon\subseteq \boldsymbol{B}_{\omega}^{\{[0)\}}$. Similar arguments imply that either $(\boldsymbol{B}_{\omega}^{\{[1)\}})\varepsilon\subseteq \boldsymbol{B}_{\omega}^{\{[1)\}}$ or $(\boldsymbol{B}_{\omega}^{\{[1)\}})\varepsilon\subseteq \boldsymbol{B}_{\omega}^{\{[0)\}}$. Then there  exists a positive integer $k$ such that $(1,1,[0))\varepsilon=(k,k,[0))$. Next we apply either Proposition~\ref{proposition-2.5} or Proposition~\ref{proposition-2.8}. This completes the first statement of the theorem.

\smallskip

\eqref{theorem-2.9(1)} follows from the definitions of endomorphisms $\alpha_{k,p}$ and $\beta_{k,p}$.

\smallskip

\eqref{theorem-2.9(2)} If $k=1$ then $(1,1,[0))\varepsilon=(1,1,[0))$ and $(0,0,[1))\varepsilon=(0,0,[1))$. By Theorem~3 of \cite{Gutik-Mykhalenych=2022}, $\varepsilon$ is the identity selfmap of $\boldsymbol{B}_{\omega}^{\mathscr{F}}$.

\smallskip

\eqref{theorem-2.9(3)} For any $i,j\in \omega$ we have that
\begin{align*}
  (i,j,[1))\alpha_{k_1,p_1}\alpha_{k_2,p_2}&=((i,j,[1))\alpha_{k_1,p_1})\alpha_{k_2,p_2}=\\
   &=(p_1+k_1i,p_1+k_1j,[1))\alpha_{k_2,p_2}=\\
   &=(p_2+k_2(p_1+k_1i),p_2+k_2(p_1+k_1j),[1))= \\
   &=((p_2+k_2p_1)+k_2k_1i,(p_2+k_2p_1)+k_2k_1j,[1)),
\end{align*}
and hence by \eqref{theorem-2.9(1)} we get that $\alpha_{k_1,p_1}\alpha_{k_2,p_2}=\alpha_{k_1k_2,p_2+k_2p_1}$.

\smallskip

\eqref{theorem-2.9(4)} For any $i,j\in \omega$ we have that
\begin{align*}
  (i,j,[1))\alpha_{k_1,p_1}\beta_{k_2,p_2}&=((i,j,[1))\alpha_{k_1,p_1})\beta_{k_2,p_2}=\\
   &=(p_1+k_1i,p_1+k_1j,[1))\beta_{k_2,p_2}=\\
   &=(p_2+k_2(p_1+k_1i),p_2+k_2(p_1+k_1j),[0))= \\
   &=((p_2+k_2p_1)+k_2k_1i,(p_2+k_2p_1)+k_2k_1j,[0)),
\end{align*}
and by \eqref{theorem-2.9(1)} we obtain that $\alpha_{k_1,p_1}\alpha_{k_2,p_2}=\beta_{k_1k_2,p_2+k_2p_1}$.

\smallskip

\eqref{theorem-2.9(5)} For any $i,j\in \omega$ we have that
\begin{align*}
  (i,j,[1))\beta_{k_1,p_1}\beta_{k_2,p_2}&=((i,j,[1))\beta_{k_1,p_1})\beta_{k_2,p_2}=\\
   &=(p_1+k_1i,p_1+k_1j,[0))\beta_{k_2,p_2}=\\
   &=(k_2(p_1+k_1i),k_2(p_1+k_1j),[0))= \\
   &=(k_2p_1+k_2k_1i,k_2p_1+k_2k_1j,[0)).
\end{align*}
Hence by \eqref{theorem-2.9(1)} we get that $\beta_{k_1,p_1}\beta_{k_2,p_2}=\beta_{k_1k_2,k_2p_1}$.

\smallskip

\eqref{theorem-2.9(6)} For any $i,j\in \omega$ we have that
\begin{align*}
  (i,j,[1))\beta_{k_1,p_1}\beta_{k_2,p_2}&=((i,j,[1))\beta_{k_1,p_1})\alpha_{k_2,p_2}=\\
   &=(p_1+k_1i,p_1+k_1j,[0))\alpha_{k_2,p_2}=\\
   &=(k_2(p_1+k_1i),k_2(p_1+k_1j),[0))= \\
   &=(k_2p_1+k_2k_1i,k_2p_1+k_2k_1j,[0)).
\end{align*}
Hence by \eqref{theorem-2.9(1)} we get that $\beta_{k_1,p_1}\alpha_{k_2,p_2}=\beta_{k_1k_2,k_2p_1}$.

\smallskip

\eqref{theorem-2.9(7)} follows from statements \eqref{theorem-2.9(5)} and \eqref{theorem-2.9(6)}.

\smallskip

\eqref{theorem-2.9(8)} The first part follows from the first statement and statements \eqref{theorem-2.9(3)} and \eqref{theorem-2.9(5)}. Next we apply \eqref{theorem-2.9(2)}.

\smallskip

\eqref{theorem-2.9(9)} Statement \eqref{theorem-2.9(3)} implies that $S_\alpha$ is a subsemigroup of $\boldsymbol{End}^1_*(\boldsymbol{B}_{\omega}^{\mathscr{F}})$. Since $\alpha_{1,0}$ is the identity of $\boldsymbol{End}^1_*(\boldsymbol{B}_{\omega}^{\mathscr{F}})$ and $\alpha_{1,0}\in S_\alpha$, $S_\alpha$ is a submonoid of $\boldsymbol{End}^1_*(\boldsymbol{B}_{\omega}^{\mathscr{F}})$.

Suppose that $\alpha_{k,p}\alpha_{k_1,p_1}=\alpha_{k,p}\alpha_{k_2,p_2}$ for some $\alpha_{k,p},\alpha_{k_1,p_1},\alpha_{k_2,p_2}$. Then by \eqref{theorem-2.9(3)} we have that $\alpha_{kk_1,p_1+k_1p}=\alpha_{kk_2,p_2+k_2p}$ and the definition of the monoid $\boldsymbol{B}_{\omega}^{\mathscr{F}}$ implies that $kk_1=kk_2$ and $p_1+k_1p=p_2+k_2p$. The first equality implies that $k_1=k_2$ because $k\geqslant 1$. The equality $p_1+k_1p=p_2+k_2p$ implies that $p_1-p_2=p(k_2-k_1)$, and since $k_1=k_2$ we get that $p_1=p_2$. Hence $S_{\alpha}$ is a left cancellative semigroup.

Suppose that $\alpha_{k_1,p_1}\alpha_{k,p}=\alpha_{k_2,p_2}\alpha_{k,p}$ for some $\alpha_{k,p},\alpha_{k_1,p_1},\alpha_{k_2,p_2}\in S_{\alpha}$. Then by \eqref{theorem-2.9(3)} we have that $\alpha_{k_1k,p+kp_1}=\alpha_{k_2k,p+kp_2}$ and the definition of the monoid $\boldsymbol{B}_{\omega}^{\mathscr{F}}$ implies that $kk_1=kk_2$ and $p+kp_1=p+kp_2$. This implies that $k_1=k_2$ and $p_1=p_2$ because $k\geqslant 1$. Hence $S_{\alpha}$ is a right cancellative semigroup.

\smallskip

\eqref{theorem-2.9(10)} follows from statements \eqref{theorem-2.9(4)}, \eqref{theorem-2.9(5)}, and \eqref{theorem-2.9(6)}.
\end{proof}

\section{{Green's relations on the semigroup $\boldsymbol{End}^1_*(\boldsymbol{B}_{\omega}^{\mathscr{F}})$}}

If $S$ is a semigroup, then we shall denote the Green relations on $S$ by $\mathscr{R}$, $\mathscr{L}$, $\mathscr{J}$, $\mathscr{D}$ and $\mathscr{H}$ (see \cite[Section~2.1]{Clifford-Preston-1961}):
\begin{align*}
    &\qquad a\mathscr{R}b \mbox{ if and only if } aS^1=bS^1;\\
    &\qquad a\mathscr{L}b \mbox{ if and only if } S^1a=S^1b;\\
    &\qquad a\mathscr{J}b \mbox{ if and only if } S^1aS^1=S^1bS^1;\\
    &\qquad \mathscr{D}=\mathscr{L}\circ\mathscr{R}=
          \mathscr{R}\circ\mathscr{L};\\
    &\qquad \mathscr{H}=\mathscr{L}\cap\mathscr{R}.
\end{align*}

In this section we describe Green's relations  on the semigroup $\boldsymbol{End}^1_*(\boldsymbol{B}_{\omega}^{\mathscr{F}})$ for the family $\mathscr{F}=\{[0),[1)\}$.

Statements \eqref{theorem-2.9(3)}--\eqref{theorem-2.9(6)} of Theorem~\ref{theorem-2.9} imply the following two lemmas.

\begin{lemma}\label{lemma-3.1}
Let $\mathscr{F}=\{[0),[1)\}$. Then  neither $\alpha_{k_1,p_1}\mathscr{R}\beta_{k_2,p_2}$ nor $\alpha_{k_1,p_1}\mathscr{L}\beta_{k_2,p_2}$ holds in the monoid $\boldsymbol{End}^1_*(\boldsymbol{B}_{\omega}^{\mathscr{F}})$ for any $\alpha_{k_1,p_1}\in S_{\alpha}$ and any $\beta_{k_2,p_2}\in S_{\beta}$.
\end{lemma}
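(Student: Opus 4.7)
The plan is to argue directly from the multiplication rules established in statements~\eqref{theorem-2.9(3)}--\eqref{theorem-2.9(6)} of Theorem~\ref{theorem-2.9}, which already do essentially all the work. The first observation I would record is that $S_\alpha\cap S_\beta=\varnothing$: indeed, $\alpha_{k,p}$ sends $(0,0,[1))$ to the triple $(p,p,[1))$, while any $\beta_{k',p'}$ sends $(0,0,[1))$ to $(p',p',[0))$, and these triples differ in their third coordinate, so no $\alpha$-type endomorphism can equal a $\beta$-type one as a map. Because $\alpha_{1,0}$ is the identity of $\boldsymbol{End}^1_*(\boldsymbol{B}_{\omega}^{\mathscr{F}})$ by statement~\eqref{theorem-2.9(8)}, we have $S^1=S=\boldsymbol{End}^1_*(\boldsymbol{B}_{\omega}^{\mathscr{F}})$, and the principal right and left ideals are simply $aS$ and $Sa$.

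To rule out $\alpha_{k_1,p_1}\mathscr{R}\beta_{k_2,p_2}$ I would assume it holds and derive a contradiction. The hypothesis yields $\alpha_{k_1,p_1}\in\beta_{k_2,p_2}S$, so $\alpha_{k_1,p_1}=\beta_{k_2,p_2}\gamma$ for some $\gamma\in S$. By the first statement of Theorem~\ref{theorem-2.9}, $\gamma$ is either an $\alpha_{l,q}$ or a $\beta_{l,q}$; statements~\eqref{theorem-2.9(6)} and \eqref{theorem-2.9(5)} then show that in either case the product $\beta_{k_2,p_2}\gamma$ belongs to $S_\beta$. This contradicts $\alpha_{k_1,p_1}\in S_\alpha$ together with $S_\alpha\cap S_\beta=\varnothing$.

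The $\mathscr{L}$ case is entirely symmetric, and I would present it in parallel: if $\alpha_{k_1,p_1}\mathscr{L}\beta_{k_2,p_2}$, then $\alpha_{k_1,p_1}=\delta\beta_{k_2,p_2}$ for some $\delta\in S$, and statements~\eqref{theorem-2.9(4)} and \eqref{theorem-2.9(5)} of Theorem~\ref{theorem-2.9} force this product to lie in $S_\beta$ regardless of whether $\delta$ is of $\alpha$- or $\beta$-type, again yielding a contradiction with $\alpha_{k_1,p_1}\in S_\alpha$. There is no substantive obstacle in this proof; it is essentially the assertion recorded in statement~\eqref{theorem-2.9(10)} that $S_\beta$ is a two-sided ideal of $\boldsymbol{End}^1_*(\boldsymbol{B}_{\omega}^{\mathscr{F}})$, specialized to separating the two Green classes, with the disjointness of $S_\alpha$ and $S_\beta$ as a map-level sanity check.
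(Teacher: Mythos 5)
Your proof is correct and follows essentially the same route as the paper, which derives this lemma directly from statements \eqref{theorem-2.9(3)}--\eqref{theorem-2.9(6)} of Theorem~\ref{theorem-2.9}: any product involving a $\beta$-type factor lands in $S_\beta$, so an $\alpha$-type element can never lie in $\beta_{k_2,p_2}S^1$ or $S^1\beta_{k_2,p_2}$. Your explicit verification that $S_\alpha\cap S_\beta=\varnothing$ (via the images of $(0,0,[1))$) is a worthwhile detail that the paper leaves implicit.
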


\begin{lemma}\label{lemma-3.2}
Let $\mathscr{F}=\{[0),[1)\}$. Then  the relation $\alpha_{k_1,p_1}\mathscr{J}\beta_{k_2,p_2}$ in $\boldsymbol{End}^1_*(\boldsymbol{B}_{\omega}^{\mathscr{F}})$ does not holds for any $\alpha_{k_1,p_1}\in S_{\alpha}$ and any $\beta_{k_2,p_2}\in S_{\beta}$.
\end{lemma}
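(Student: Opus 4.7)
The plan is to exploit the ideal structure already recorded in Theorem~\ref{theorem-2.9}. Since by statement~\eqref{theorem-2.9(8)} the element $\alpha_{1,0}$ is the identity of $\boldsymbol{End}^1_*(\boldsymbol{B}_{\omega}^{\mathscr{F}})$, we may replace $S^1$ by $S=\boldsymbol{End}^1_*(\boldsymbol{B}_{\omega}^{\mathscr{F}})$ throughout the definition of~$\mathscr{J}$. Thus $\alpha_{k_1,p_1}\mathscr{J}\beta_{k_2,p_2}$ would mean that the principal two-sided ideals $S\alpha_{k_1,p_1}S$ and $S\beta_{k_2,p_2}S$ coincide, and it is enough to exhibit an element of one that cannot lie in the other.

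First I would observe that $S_{\beta}$ is a two-sided ideal in $S$: this is precisely statement~\eqref{theorem-2.9(10)}, which follows from the multiplication formulas~\eqref{theorem-2.9(4)},~\eqref{theorem-2.9(5)},~\eqref{theorem-2.9(6)} showing that any product involving at least one $\beta_{k,p}$ ends up in $S_{\beta}$. Consequently
\begin{equation*}
S\beta_{k_2,p_2}S\subseteq S_{\beta}.
\end{equation*}
On the other side, taking the identity $\alpha_{1,0}$ on both sides gives $\alpha_{k_1,p_1}\in S\alpha_{k_1,p_1}S$.

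It remains to verify that $\alpha_{k_1,p_1}\notin S_{\beta}$, i.e., that no $\alpha$-endomorphism coincides with a $\beta$-endomorphism. This is immediate from the very definitions (Examples~\ref{example-2.3} and \ref{example-2.6}): evaluating at $(0,0,[1))$ we get $(p_1,p_1,[1))$ for $\alpha_{k_1,p_1}$ but $(p_2,p_2,[0))$ for $\beta_{k_2,p_2}$, and the third coordinates $[0)$ and $[1)$ are distinct elements of $\mathscr{F}$. Hence $\alpha_{k_1,p_1}\in S\alpha_{k_1,p_1}S$ while $\alpha_{k_1,p_1}\notin S\beta_{k_2,p_2}S$, so the two principal ideals differ and the $\mathscr{J}$-relation fails.

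There is essentially no obstacle here; the lemma is a direct corollary of the ideal statement~\eqref{theorem-2.9(10)} combined with the obvious disjointness $S_{\alpha}\cap S_{\beta}=\varnothing$. The only thing one has to be careful about is to invoke statement~\eqref{theorem-2.9(8)} so that the $S^1$'s in the definition of $\mathscr{J}$ can be replaced by $S$ without extra case analysis.
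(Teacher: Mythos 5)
Your proof is correct and follows essentially the same route as the paper, which simply asserts that Lemma~\ref{lemma-3.2} is implied by the product formulas \eqref{theorem-2.9(3)}--\eqref{theorem-2.9(6)} of Theorem~\ref{theorem-2.9}: those formulas show every product involving a $\beta$ is again a $\beta$, so $\alpha_{k_1,p_1}$ lies in its own principal ideal but not in $S\beta_{k_2,p_2}S\subseteq S_{\beta}$, exactly as you argue via statement \eqref{theorem-2.9(10)}. Your write-up just makes explicit the details the authors left implicit.
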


\begin{proposition}\label{proposition-3.3}
Let $\mathscr{F}=\{[0),[1)\}$. Then the following statements hold:
\begin{enumerate}
  \item\label{theorem-3.3(1)} $\alpha_{k_1,p_1}\mathscr{R}\alpha_{k_2,p_2}$ in $\boldsymbol{End}^1_*(\boldsymbol{B}_{\omega}^{\mathscr{F}})$ if and only if $k_1=k_2$ and $p_1=p_2$;
  \item\label{theorem-3.3(2)} $\alpha_{k_1,p_1}\mathscr{L}\alpha_{k_2,p_2}$ in $\boldsymbol{End}^1_*(\boldsymbol{B}_{\omega}^{\mathscr{F}})$ if and only if $k_1=k_2$ and $p_1=p_2$;
  \item\label{theorem-3.3(3)} $\beta_{k_1,p_1}\mathscr{R}\beta_{k_2,p_2}$ in $\boldsymbol{End}^1_*(\boldsymbol{B}_{\omega}^{\mathscr{F}})$ if and only if $k_1=k_2$ and $p_1=p_2$;
  \item\label{theorem-3.3(4)} $\beta_{k_1,p_1}\mathscr{L}\beta_{k_2,p_2}$ in $\boldsymbol{End}^1_*(\boldsymbol{B}_{\omega}^{\mathscr{F}})$ if and only if  $k_1=k_2$ and $p_1=p_2$;
  \item\label{theorem-3.3(6)} $\alpha_{k_1,p_1}\mathscr{J}\alpha_{k_2,p_2}$ in $\boldsymbol{End}^1_*(\boldsymbol{B}_{\omega}^{\mathscr{F}})$ if and only if $k_1=k_2$ and $p_1=p_2$;
  \item\label{theorem-3.3(7)} $\beta_{k_1,p_1}\mathscr{J}\beta_{k_2,p_2}$ in $\boldsymbol{End}^1_*(\boldsymbol{B}_{\omega}^{\mathscr{F}})$ if and only if $k_1=k_2$ and $p_1=p_2$;
\end{enumerate}
\end{proposition}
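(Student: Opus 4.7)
The implication ``$\Leftarrow$'' is immediate in each of the six items, since every Green's relation is reflexive. For ``$\Rightarrow$'', the plan is to exploit the composition formulas of Theorem~\ref{theorem-2.9}\eqref{theorem-2.9(3)}--\eqref{theorem-2.9(6)}, together with three structural facts from that theorem: $\alpha_{1,0}$ is the identity of $\boldsymbol{End}^1_*(\boldsymbol{B}_{\omega}^{\mathscr{F}})$, so $S^1=S$ and the Green relations admit the usual two-sided divisibility description; the symbol $\beta_{k,p}$ is defined only for $k\geqslant 2$; and a product $xy$ lies in $S_\beta$ as soon as either factor does.

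For items \eqref{theorem-3.3(1)} and \eqref{theorem-3.3(2)}, suppose $\alpha_{k_1,p_1}\mathscr{R}\alpha_{k_2,p_2}$; there exist $u,v$ with $\alpha_{k_1,p_1}u=\alpha_{k_2,p_2}$ and $\alpha_{k_2,p_2}v=\alpha_{k_1,p_1}$. Since both right-hand sides are $\alpha$-type, $u$ and $v$ must lie in $S_\alpha$; writing $u=\alpha_{a,b}$, $v=\alpha_{c,d}$ and reading off the first coordinate via Theorem~\ref{theorem-2.9}\eqref{theorem-2.9(3)} gives $k_1a=k_2$ and $k_2c=k_1$, so $ac=1$, forcing $a=c=1$. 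Then $b\in\{0,\ldots,a-1\}=\{0\}$, so $u=v=\alpha_{1,0}$ and equality follows. The $\mathscr{L}$ case is the mirror argument using the same formula.

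For items \eqref{theorem-3.3(3)} and \eqref{theorem-3.3(4)} the multipliers $u,v$ could a priori live in $S_\alpha$ or $S_\beta$. The key observation is that formulas \eqref{theorem-2.9(5)} and \eqref{theorem-2.9(6)} agree on first coordinates: both $\alpha_{a,b}\beta_{k,p}$ and $\beta_{a,b}\beta_{k,p}$ have first coordinate $ak$, with the symmetric statement for right multiplication. So the chain $k_1a=k_2$, $k_2c=k_1$ reappears and forces $a=c=1$. Because $\beta_{1,\cdot}$ is not a valid symbol in our monoid, $u$ and $v$ must lie in $S_\alpha$, whence $u=v=\alpha_{1,0}$ and $\beta_{k_1,p_1}=\beta_{k_2,p_2}$.

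Items \eqref{theorem-3.3(6)} and \eqref{theorem-3.3(7)} follow the same template with two multipliers on each side: $u_1av_1=b$ and $u_2bv_2=a$. In item \eqref{theorem-3.3(6)} all four multipliers must be $\alpha$-type (otherwise the outputs would land in $S_\beta$), producing an equation of the form $k_{u_1}k_{v_1}k_{u_2}k_{v_2}=1$ in $\mathbb{N}$ and forcing each multiplier to equal $\alpha_{1,0}$. For item \eqref{theorem-3.3(7)} the same product-equals-one forcing works thanks to the type-independence of the first-coordinate rule noted above. The main bookkeeping point, which I expect to be the only place requiring care, is re-checking in the $\beta$-case that each multiplier collapses to $\alpha_{1,0}$ rather than to a nonexistent $\beta_{1,\cdot}$; this is handled exactly as in items \eqref{theorem-3.3(3)}--\eqref{theorem-3.3(4)}, and the whole argument reduces to elementary divisibility in $\mathbb{N}$.
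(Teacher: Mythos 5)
Your proof is correct, and while it follows the same overall strategy as the paper's (translate each Green's relation into two divisibility equations via the composition formulas of Theorem~\ref{theorem-2.9}\eqref{theorem-2.9(3)}--\eqref{theorem-2.9(6)}, conclude that the first indices of all multipliers equal $1$, and hence that every multiplier is $\alpha_{1,0}$), it organizes the work in a genuinely leaner way. The paper handles the type ambiguity of the multipliers by exhaustive case analysis: four cases (a)--(d) for statement \eqref{theorem-3.3(4)} and fifteen cases (a)--(o) for statement \eqref{theorem-3.3(7)}, in each of which the impossible cases are ruled out by the constraint $k\geqslant 2$ for $\beta$-symbols. Your key observation --- that in all four composition formulas the resulting first index is the product $k_1k_2$ of the first indices of the factors, independently of whether each factor is of $\alpha$- or $\beta$-type --- collapses all of these cases into a single divisibility argument, with the nonexistence of $\beta_{1,\cdot}$ invoked only at the end to identify each multiplier as $\alpha_{1,0}$. (In effect you are using that $\alpha_{k,p}\mapsto k$, $\beta_{k,p}\mapsto k$ is a homomorphism onto $(\mathbb{N},\cdot)$.) You also shortcut the paper's computations with the $p$-parameters of the multipliers by noting that a first index equal to $1$ already forces the $p$-parameter into $\{0,\ldots,0\}$. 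Both routes are sound; yours trades the paper's explicit bookkeeping for a structural remark that makes statement \eqref{theorem-3.3(7)} essentially no harder than statement \eqref{theorem-3.3(3)}.
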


\begin{proof}
\eqref{theorem-3.3(1)}  Implication $(\Leftarrow)$ is trivial.

$(\Rightarrow)$ Suppose that $\alpha_{k_1,p_1}\mathscr{R}\alpha_{k_2,p_2}$ in $\boldsymbol{End}^1_*(\boldsymbol{B}_{\omega}^{\mathscr{F}})$. The definition of the Green relation $\mathscr{R}$ implies that there exist $\varepsilon_1,\varepsilon_2\in \boldsymbol{End}^1_*(\boldsymbol{B}_{\omega}^{\mathscr{F}})$ such that
\begin{equation*}
  \alpha_{k_1,p_1}=\alpha_{k_2,p_2}\varepsilon_1 \qquad \hbox{and} \qquad \alpha_{k_2,p_2}=\alpha_{k_1,p_1}\varepsilon_2.
\end{equation*}
By statements \eqref{theorem-2.9(3)} and \eqref{theorem-2.9(4)} of Theorem~\ref{theorem-2.9} we have that $\varepsilon_1,\varepsilon_2\in S_{\alpha}$, i.e., $\varepsilon_1=\alpha_{k_1^{\prime},p_1^{\prime}}$ and $\varepsilon_2=\alpha_{k_2^{\prime},p_2^{\prime}}$ for some positive integers $k_1^{\prime}$ and $k_2^{\prime}$, $p_1^{\prime}\in\{0,\ldots,k_1^{\prime}-1\}$, and $p_2^{\prime}\in\{0,\ldots,k_2^{\prime}-1\}$.

We claim that $\varepsilon_1=\varepsilon_2=\alpha_{1,0}$. Indeed, by Theorem~\ref{theorem-2.9}\eqref{theorem-2.9(3)} we get that
\begin{equation*}
  \alpha_{k_1,p_1}=\alpha_{k_2,p_2}\alpha_{k_1^{\prime},p_1^{\prime}}=\alpha_{k_2k_1^{\prime},p_1^{\prime}+k_1^{\prime}p_2}
\qquad \hbox{and} \qquad
  \alpha_{k_2,p_2}=\alpha_{k_1,p_1}\alpha_{k_2^{\prime},p_2^{\prime}}=\alpha_{k_1k_2^{\prime},p_2^{\prime}+k_2^{\prime}p_1},
\end{equation*}
which implies the following  equalities
\begin{align*}
  k_1&=k_2k_1^{\prime}, \\
  p_1&=p_1^{\prime}+k_1^{\prime}p_2, \\
  k_2&=k_1k_2^{\prime}, \\
  p_2&=p_2^{\prime}+k_2^{\prime}p_1.
\end{align*}
Since $k_1,k_2,k_1^{\prime},k_2^{\prime}$ are positive integers, the equalities $k_1=k_2k_1^{\prime}$ and $k_2=k_1k_2^{\prime}$ imply that $k_1^{\prime}=k_2^{\prime}=1$. Then we have that $p_1=p_1^{\prime}+p_2$ and $p_2=p_2^{\prime}+p_1$ which implies that $p_1^{\prime}=p_2^{\prime}=0$. Then we have that $\varepsilon_1=\varepsilon_2=\alpha_{1,0}$, and hence $k_1=k_2$ and $p_1=p_2$.

\smallskip

The proof of statement \eqref{theorem-3.3(2)} is similar to \eqref{theorem-3.3(1)}.

\smallskip

\eqref{theorem-3.3(3)} Implication $(\Leftarrow)$ is trivial.

$(\Rightarrow)$ Suppose that $\beta_{k_1,p_1}\mathscr{R}\beta_{k_2,p_2}$ in $\boldsymbol{End}^1_*(\boldsymbol{B}_{\omega}^{\mathscr{F}})$. The definition of the Green relation $\mathscr{R}$ implies that there exist $\varepsilon_1,\varepsilon_2\in \boldsymbol{End}^1_*(\boldsymbol{B}_{\omega}^{\mathscr{F}})$ such that
\begin{equation*}
  \beta_{k_1,p_1}=\beta_{k_2,p_2}\varepsilon_1 \qquad \hbox{and} \qquad \beta_{k_2,p_2}=\beta_{k_1,p_1}\varepsilon_2.
\end{equation*}
Then we have that
\begin{enumerate}
  \item[(a)] either $\varepsilon_1=\alpha_{k_1^{\prime},p_1^{\prime}}$ or $\varepsilon_1=\beta_{k_1^{\prime},p_1^{\prime}}$; \; and
  \item[(b)] either $\varepsilon_2=\alpha_{k_2^{\prime},p_2^{\prime}}$ or $\varepsilon_2=\beta_{k_2^{\prime},p_2^{\prime}}$.
\end{enumerate}

By Theorem~\ref{theorem-2.9}\eqref{theorem-2.9(7)} in any above  case we get that
\begin{align*}
  \beta_{k_1,p_1}&=\beta_{k_2,p_2}\varepsilon_1=\beta_{k_2k_1^{\prime},k_1^{\prime}p_2}; \\
  \beta_{k_2,p_2}&=\beta_{k_1,p_1}\varepsilon_2=\beta_{k_1k_2^{\prime},k_2^{\prime}p_1},
\end{align*}
and hence
\begin{align*}
  k_1&=k_2k_1^{\prime}; \\
  p_1&=k_1^{\prime}p_2; \\
  k_2&=k_1k_2^{\prime}; \\
  p_2&=k_2^{\prime}p_1.
\end{align*}
This implies that $p_1=k_1^{\prime}k_2^{\prime}p_1$ and $p_2=k_2^{\prime}k_1^{\prime}p_2$, and hence $k_1^{\prime}k_2^{\prime}=1$. Since $k_1^{\prime}$ and $k_2^{\prime}$ are positive integers, $k_1^{\prime}=k_2^{\prime}=1$, and hence $k_1=k_2$. This implies that $\varepsilon_1=\alpha_{k_1^{\prime},p_1^{\prime}}$ and $\varepsilon_2=\alpha_{k_2^{\prime},p_2^{\prime}}$. Also the  equalities $p_1=k_1^{\prime}p_2$ and $p_2=k_2^{\prime}p_1$ imply that $p_1=p_2$.

\smallskip

\eqref{theorem-3.3(4)} Implication $(\Leftarrow)$ is trivial.

$(\Rightarrow)$ Suppose that $\beta_{k_1,p_1}\mathscr{L}\beta_{k_2,p_2}$ in $\boldsymbol{End}^1_*(\boldsymbol{B}_{\omega}^{\mathscr{F}})$. The definition of the Green relation $\mathscr{L}$ implies that there exist $\varepsilon_1,\varepsilon_2\in \boldsymbol{End}^1_*(\boldsymbol{B}_{\omega}^{\mathscr{F}})$ such that
\begin{equation*}
  \beta_{k_1,p_1}=\varepsilon_1\beta_{k_2,p_2} \qquad \hbox{and} \qquad \beta_{k_2,p_2}=\varepsilon_2\beta_{k_1,p_1}.
\end{equation*}

We consider the possible cases:
\begin{enumerate}
  \item[(a)] $\varepsilon_1=\alpha_{k_1^{\prime},p_1^{\prime}}$ and $\varepsilon_2=\alpha_{k_2^{\prime},p_2^{\prime}}$;
  \item[(b)] $\varepsilon_1=\alpha_{k_1^{\prime},p_1^{\prime}}$ and $\varepsilon_2=\beta_{k_2^{\prime},p_2^{\prime}}$;
  \item[(c)] $\varepsilon_1=\beta_{k_1^{\prime},p_1^{\prime}}$ and $\varepsilon_2=\alpha_{k_2^{\prime},p_2^{\prime}}$;
  \item[(d)] $\varepsilon_1=\beta_{k_1^{\prime},p_1^{\prime}}$ and $\varepsilon_2=\beta_{k_2^{\prime},p_2^{\prime}}$.
\end{enumerate}

In case (a) by Theorem~\ref{theorem-2.9}\eqref{theorem-2.9(4)} we have that
\begin{align*}
  \beta_{k_1,p_1}&=\alpha_{k_1^{\prime},p_1^{\prime}}\beta_{k_2,p_2}= \beta_{k_1^{\prime}k_2,p_2+k_2p_1^{\prime}}; \\
  \beta_{k_2,p_2}&=\alpha_{k_2^{\prime},p_2^{\prime}}\beta_{k_1,p_1}= \beta_{k_2^{\prime}k_1,p_1+k_1p_2^{\prime}},
\end{align*}
which implies the following equalities
\begin{align*}
  k_1&=k_1^{\prime}k_2; \\
  p_1&=p_2+k_2p_1^{\prime}; \\
  k_2&=k_2^{\prime}k_1; \\
  p_2&=p_1+k_1p_2^{\prime}.
\end{align*}
Since $k_1,k_2,k_1^{\prime},k_2^{\prime}$ are positive integers, the equalities $k_1=k_1^{\prime}k_2$ and $k_2=k_2^{\prime}k_1$ imply that $k_1^{\prime}k_2^{\prime}=1$, and hence $k_1^{\prime}=k_2^{\prime}=1$. This implies that $k_1=k_2$. Next, by the equalities $p_1=p_2+k_2p_1^{\prime}$, $p_2=p_1+k_1p_2^{\prime}$, and $k_1=k_2$ we get that $p_1^{\prime}=-p_2^{\prime}$. Since $p_1^{\prime}$ and $p_2^{\prime}$ are nonnegative integers, we obtain that $p_1^{\prime}=p_2^{\prime}=0$, which implies that
\begin{equation*}
\varepsilon_1=\alpha_{k_1^{\prime},p_1^{\prime}}=\varepsilon_2=\alpha_{k_2^{\prime},p_2^{\prime}}=\alpha_{1,0}
\end{equation*}
is the unit element of the monoid $\boldsymbol{End}^1_*(\boldsymbol{B}_{\omega}^{\mathscr{F}})$. Then $\beta_{k_1,p_1}=\beta_{k_2,p_2}$, and the definition of elements of $S_{\beta}$ implies that $k_1=k_2$ and $p_1=p_2$.

Suppose case (b) holds. By statements \eqref{theorem-2.9(4)} and \eqref{theorem-2.9(5)} of Theorem~\ref{theorem-2.9} we get that
\begin{align*}
  \beta_{k_1,p_1}&=\alpha_{k_1^{\prime},p_1^{\prime}}\beta_{k_2,p_2}= \beta_{k_1^{\prime}k_2,p_2+k_2p_1^{\prime}}; \\
  \beta_{k_2,p_2}&=\beta_{k_2^{\prime},p_2^{\prime}}\beta_{k_1,p_1}= \beta_{k_2^{\prime}k_1,k_1p_2^{\prime}},
\end{align*}
which implies the following equalities
\begin{align*}
  k_1&=k_1^{\prime}k_2; \\
  p_1&=p_2+k_2p_1^{\prime}; \\
  k_2&=k_2^{\prime}k_1; \\
  p_2&=k_1p_2^{\prime}.
\end{align*}
Since $k_1,k_2,k_1^{\prime},k_2^{\prime}$ are positive integers, the equalities $k_1=k_1^{\prime}k_2$ and $k_2=k_2^{\prime}k_1$ imply that $k_1^{\prime}k_2^{\prime}=1$, and hence $k_1^{\prime}=k_2^{\prime}=1$. But $k_2^{\prime}\geqslant 2$, a contradiction. The obtained contradiction implies that case (b) is impossible.

Suppose case (c) holds. By statements \eqref{theorem-2.9(4)} and \eqref{theorem-2.9(5)} of Theorem~\ref{theorem-2.9}\eqref{theorem-2.9(4)} we get that
\begin{align*}
  \beta_{k_1,p_1}&=\beta_{k_1^{\prime},p_1^{\prime}}\beta_{k_2,p_2}= \beta_{k_1^{\prime}k_2,k_2p_1^{\prime}}; \\
  \beta_{k_2,p_2}&=\alpha_{k_2^{\prime},p_2^{\prime}}\beta_{k_1,p_1}= \beta_{k_2^{\prime}k_1,p_1+k_1p_2^{\prime}},
\end{align*}
which implies the following equalities
\begin{align*}
  k_1&=k_1^{\prime}k_2; \\
  p_1&=k_2p_1^{\prime}; \\
  k_2&=k_2^{\prime}k_1; \\
  p_2&=p_1+k_1p_2^{\prime}.
\end{align*}
Since $k_1,k_2,k_1^{\prime},k_2^{\prime}$ are positive integers, the equalities $k_1=k_1^{\prime}k_2$ and $k_2=k_2^{\prime}k_1$ imply that $k_1^{\prime}k_2^{\prime}=1$, and hence $k_1^{\prime}=k_2^{\prime}=1$. But $k_1^{\prime}\geqslant 2$, a contradiction. The obtained contradiction implies that case (c) does not hold.

Suppose case (d) holds. By Theorem~\ref{theorem-2.9}\eqref{theorem-2.9(5)} we have that
\begin{align*}
  \beta_{k_1,p_1}&=\beta_{k_1^{\prime},p_1^{\prime}}\beta_{k_2,p_2}= \beta_{k_1^{\prime}k_2,k_2p_1^{\prime}}; \\
  \beta_{k_2,p_2}&=\beta_{k_2^{\prime},p_2^{\prime}}\beta_{k_1,p_1}= \beta_{k_2^{\prime}k_1,k_1p_2^{\prime}},
\end{align*}
which implies the following equalities
\begin{align*}
  k_1&=k_1^{\prime}k_2; \\
  p_1&=k_2p_1^{\prime}; \\
  k_2&=k_2^{\prime}k_1; \\
  p_2&=k_1p_2^{\prime}.
\end{align*}
Since $k_1,k_2,k_1^{\prime},k_2^{\prime}$ are positive integers, the equalities $k_1=k_1^{\prime}k_2$ and $k_2=k_2^{\prime}k_1$ imply that $k_1^{\prime}k_2^{\prime}=1$, and hence $k_1^{\prime}=k_2^{\prime}=1$. But $k_1^{\prime}\geqslant 2$ and $k_2^{\prime}\geqslant 2$, a contradiction. The obtained contradiction implies that case (d) does not hold.

\smallskip

\eqref{theorem-3.3(6)}   Implication $(\Leftarrow)$ is trivial.

$(\Rightarrow)$ Suppose that $\alpha_{k_1,p_1}\mathscr{J}\alpha_{k_2,p_2}$ in $\boldsymbol{End}^1_*(\boldsymbol{B}_{\omega}^{\mathscr{F}})$. The definition of the Green relation $\mathscr{J}$ implies that there exist $\varepsilon_1^{\prime},\varepsilon_1^{\prime\prime},\varepsilon_2^{\prime},\varepsilon_2^{\prime\prime}\in \boldsymbol{End}^1_*(\boldsymbol{B}_{\omega}^{\mathscr{F}})$ such that
\begin{equation*}
  \alpha_{k_1,p_1}=\varepsilon_1^{\prime}\alpha_{k_2,p_2}\varepsilon_1^{\prime\prime} \qquad \hbox{and} \qquad \alpha_{k_2,p_2}=\varepsilon_2^{\prime}\alpha_{k_1,p_1}\varepsilon_2^{\prime\prime}.
\end{equation*}
By statements \eqref{theorem-2.9(3)} and \eqref{theorem-2.9(4)} of Theorem~\ref{theorem-2.9} we have that $\varepsilon_1^{\prime},\varepsilon_1^{\prime\prime},\varepsilon_2^{\prime},\varepsilon_2^{\prime\prime}\in S_{\alpha}$, i.e., $\varepsilon_1^{\prime}=\alpha_{k_1^{\prime},p_1^{\prime}}$, $\varepsilon_1^{\prime\prime}=\alpha_{k_1^{\prime\prime},p_1^{\prime\prime}}$, $\varepsilon_2^{\prime}=\alpha_{k_2^{\prime},p_2^{\prime}}$, and $\varepsilon_2^{\prime\prime}=\alpha_{k_2^{\prime\prime},p_2^{\prime\prime}}$ for some positive integers $k_1^{\prime}$, $k_1^{\prime\prime}$, $k_2^{\prime}$ and $k_2^{\prime\prime}$, $p_1^{\prime}\in\{0,\ldots,k_1^{\prime}-1\}$, $p_1^{\prime\prime}\in\{0,\ldots,k_1^{\prime\prime}-1\}$, $p_2^{\prime}\in\{0,\ldots,k_2^{\prime}-1\}$, and $p_2^{\prime\prime}\in\{0,\ldots,k_2^{\prime\prime}-1\}$.

By Theorem~\ref{theorem-2.9}\eqref{theorem-2.9(3)} we have that
\begin{align*}
  \alpha_{k_1,p_1}&=\alpha_{k_1^{\prime},p_1^{\prime}}\alpha_{k_2,p_2}\alpha_{k_1^{\prime\prime},p_1^{\prime\prime}}
   =\alpha_{k_1^{\prime}k_2,p_2+k_2p_1^{\prime}}\alpha_{k_1^{\prime\prime},p_1^{\prime\prime}}
   =\alpha_{k_1^{\prime}k_2k_1^{\prime\prime},p_1^{\prime\prime}+k_1^{\prime\prime}p_2+k_1^{\prime\prime}k_2p_1^{\prime}};\\
  \alpha_{k_2,p_2}&=\alpha_{k_2^{\prime},p_2^{\prime}}\alpha_{k_1,p_1}\alpha_{k_2^{\prime\prime},p_2^{\prime\prime}}
   =\alpha_{k_2^{\prime}k_1,p_1+k_1p_2^{\prime}}\alpha_{k_2^{\prime\prime},p_2^{\prime\prime}}
   =\alpha_{k_2^{\prime}k_1k_2^{\prime\prime},p_2^{\prime\prime}+k_2^{\prime\prime}p_1+k_2^{\prime\prime}k_1p_2^{\prime}},
\end{align*}
which implies the following equalities
\begin{align*}
  k_1&=k_1^{\prime}k_2k_1^{\prime\prime}; \\
  p_1&=p_1^{\prime\prime}+k_1^{\prime\prime}p_2+k_1^{\prime\prime}k_2p_1^{\prime}; \\
  k_2&=k_2^{\prime}k_1k_2^{\prime\prime}; \\
  p_2&=p_2^{\prime\prime}+k_2^{\prime\prime}p_1+k_2^{\prime\prime}k_1p_2^{\prime}.
\end{align*}
Since $k_1,k_2,k_1^{\prime},k_2^{\prime},k_1^{\prime\prime},k_2^{\prime\prime}$ are positive integers, the equalities $k_1=k_1^{\prime}k_2k_1^{\prime\prime}$ and $k_2=k_2^{\prime}k_1k_2^{\prime\prime}$ imply that $k_1^{\prime}k_2^{\prime}k_1^{\prime\prime}k_2^{\prime\prime}=1$, and hence we have
\begin{equation*}
k_1^{\prime}=k_2^{\prime}=k_1^{\prime\prime}=k_2^{\prime\prime}=1.
\end{equation*}
This implies that $k_1=k_2$. Hence the equalities $k_1^{\prime}=k_2^{\prime}=k_1^{\prime\prime}=k_2^{\prime\prime}=1$, $k_1=k_2$ and the equalities
\begin{align*}
  p_1&=p_1^{\prime\prime}+k_1^{\prime\prime}p_2+k_1^{\prime\prime}k_2p_1^{\prime}; \\
  p_2&=p_2^{\prime\prime}+k_2^{\prime\prime}p_1+k_2^{\prime\prime}k_1p_2^{\prime}
\end{align*}
imply that
\begin{align*}
  p_1&=p_1^{\prime\prime}+p_2+k_1p_1^{\prime}; \\
  p_2&=p_2^{\prime\prime}+p_1+k_1p_2^{\prime}.
\end{align*}
Hence we get that
\begin{equation*}
  p_1^{\prime\prime}+p_2^{\prime\prime}+k_1p_2^{\prime}+k_1p_1^{\prime}=0.
\end{equation*}
Since $k_1$ is a positive integer, by the above equality we have that
\begin{equation*}
  p_1^{\prime\prime}=p_2^{\prime\prime}=p_2^{\prime}=p_1^{\prime}=0.
\end{equation*}
This implies that
\begin{equation*}
  \varepsilon_1^{\prime}=\varepsilon_1^{\prime\prime}=\varepsilon_2^{\prime}=\varepsilon_2^{\prime\prime}=\alpha_{1,0},
\end{equation*}
and hence we have that $\alpha_{k_1,p_1}=\alpha_{k_2,p_2}$, which implies the requested equalities $k_1=k_2$ and $p_1=p_2$.

\smallskip

\eqref{theorem-3.3(7)}   Implication $(\Leftarrow)$ is trivial.

$(\Rightarrow)$ Suppose that $\beta_{k_1,p_1}\mathscr{J}\beta_{k_2,p_2}$ in $\boldsymbol{End}^1_*(\boldsymbol{B}_{\omega}^{\mathscr{F}})$. The definition of the Green relation $\mathscr{J}$ implies that there exist $\varepsilon_1^{\prime},\varepsilon_1^{\prime\prime},\varepsilon_2^{\prime},\varepsilon_2^{\prime\prime}\in \boldsymbol{End}^1_*(\boldsymbol{B}_{\omega}^{\mathscr{F}})$ such that
\begin{equation*}
  \beta_{k_1,p_1}=\varepsilon_1^{\prime}\beta_{k_2,p_2}\varepsilon_1^{\prime\prime} \qquad \hbox{and} \qquad \beta_{k_2,p_2}=\varepsilon_2^{\prime}\beta_{k_1,p_1}\varepsilon_2^{\prime\prime}.
\end{equation*}

We consider the possible cases:
\begin{itemize}
  \item[(a)] $\varepsilon_1^{\prime}=\alpha_{k_1^{\prime},p_1^{\prime}}$, $\varepsilon_1^{\prime\prime}=\alpha_{k_1^{\prime\prime},p_1^{\prime\prime}}$, $\varepsilon_2^{\prime}=\alpha_{k_2^{\prime},p_2^{\prime}}$, and $\varepsilon_2^{\prime\prime}=\alpha_{k_2^{\prime\prime},p_2^{\prime\prime}}$;

  \item[(b)] $\varepsilon_1^{\prime}=\alpha_{k_1^{\prime},p_1^{\prime}}$, $\varepsilon_1^{\prime\prime}=\alpha_{k_1^{\prime\prime},p_1^{\prime\prime}}$, $\varepsilon_2^{\prime}=\alpha_{k_2^{\prime},p_2^{\prime}}$, and $\varepsilon_2^{\prime\prime}=\beta_{k_2^{\prime\prime},p_2^{\prime\prime}}$;

  \item[(c)] $\varepsilon_1^{\prime}=\alpha_{k_1^{\prime},p_1^{\prime}}$, $\varepsilon_1^{\prime\prime}=\alpha_{k_1^{\prime\prime},p_1^{\prime\prime}}$, $\varepsilon_2^{\prime}=\beta_{k_2^{\prime},p_2^{\prime}}$, and $\varepsilon_2^{\prime\prime}=\alpha_{k_2^{\prime\prime},p_2^{\prime\prime}}$;

  \item[(d)] $\varepsilon_1^{\prime}=\alpha_{k_1^{\prime},p_1^{\prime}}$, $\varepsilon_1^{\prime\prime}=\alpha_{k_1^{\prime\prime},p_1^{\prime\prime}}$, $\varepsilon_2^{\prime}=\beta_{k_2^{\prime},p_2^{\prime}}$, and $\varepsilon_2^{\prime\prime}=\beta_{k_2^{\prime\prime},p_2^{\prime\prime}}$;

  \item[(e)] $\varepsilon_1^{\prime}=\alpha_{k_1^{\prime},p_1^{\prime}}$, $\varepsilon_1^{\prime\prime}=\beta_{k_1^{\prime\prime},p_1^{\prime\prime}}$, $\varepsilon_2^{\prime}=\alpha_{k_2^{\prime},p_2^{\prime}}$, and $\varepsilon_2^{\prime\prime}=\alpha_{k_2^{\prime\prime},p_2^{\prime\prime}}$;

  \item[(f)] $\varepsilon_1^{\prime}=\alpha_{k_1^{\prime},p_1^{\prime}}$, $\varepsilon_1^{\prime\prime}=\beta_{k_1^{\prime\prime},p_1^{\prime\prime}}$, $\varepsilon_2^{\prime}=\alpha_{k_2^{\prime},p_2^{\prime}}$, and $\varepsilon_2^{\prime\prime}=\beta_{k_2^{\prime\prime},p_2^{\prime\prime}}$;

  \item[(g)] $\varepsilon_1^{\prime}=\alpha_{k_1^{\prime},p_1^{\prime}}$, $\varepsilon_1^{\prime\prime}=\beta_{k_1^{\prime\prime},p_1^{\prime\prime}}$, $\varepsilon_2^{\prime}=\beta_{k_2^{\prime},p_2^{\prime}}$, and $\varepsilon_2^{\prime\prime}=\alpha_{k_2^{\prime\prime},p_2^{\prime\prime}}$;

  \item[(h)] $\varepsilon_1^{\prime}=\alpha_{k_1^{\prime},p_1^{\prime}}$, $\varepsilon_1^{\prime\prime}=\beta_{k_1^{\prime\prime},p_1^{\prime\prime}}$, $\varepsilon_2^{\prime}=\beta_{k_2^{\prime},p_2^{\prime}}$, and $\varepsilon_2^{\prime\prime}=\beta_{k_2^{\prime\prime},p_2^{\prime\prime}}$;

  \item[(i)] $\varepsilon_1^{\prime}=\beta_{k_1^{\prime},p_1^{\prime}}$, $\varepsilon_1^{\prime\prime}=\alpha_{k_1^{\prime\prime},p_1^{\prime\prime}}$, $\varepsilon_2^{\prime}=\alpha_{k_2^{\prime},p_2^{\prime}}$, and $\varepsilon_2^{\prime\prime}=\alpha_{k_2^{\prime\prime},p_2^{\prime\prime}}$;

  \item[(j)] $\varepsilon_1^{\prime}=\beta_{k_1^{\prime},p_1^{\prime}}$, $\varepsilon_1^{\prime\prime}=\alpha_{k_1^{\prime\prime},p_1^{\prime\prime}}$, $\varepsilon_2^{\prime}=\alpha_{k_2^{\prime},p_2^{\prime}}$, and $\varepsilon_2^{\prime\prime}=\beta_{k_2^{\prime\prime},p_2^{\prime\prime}}$;

  \item[(k)] $\varepsilon_1^{\prime}=\beta_{k_1^{\prime},p_1^{\prime}}$, $\varepsilon_1^{\prime\prime}=\alpha_{k_1^{\prime\prime},p_1^{\prime\prime}}$, $\varepsilon_2^{\prime}=\beta_{k_2^{\prime},p_2^{\prime}}$, and $\varepsilon_2^{\prime\prime}=\alpha_{k_2^{\prime\prime},p_2^{\prime\prime}}$;

  \item[(l)] $\varepsilon_1^{\prime}=\beta_{k_1^{\prime},p_1^{\prime}}$, $\varepsilon_1^{\prime\prime}=\alpha_{k_1^{\prime\prime},p_1^{\prime\prime}}$, $\varepsilon_2^{\prime}=\beta_{k_2^{\prime},p_2^{\prime}}$, and $\varepsilon_2^{\prime\prime}=\beta_{k_2^{\prime\prime},p_2^{\prime\prime}}$;

  \item[(m)] $\varepsilon_1^{\prime}=\beta_{k_1^{\prime},p_1^{\prime}}$, $\varepsilon_1^{\prime\prime}=\beta_{k_1^{\prime\prime},p_1^{\prime\prime}}$, $\varepsilon_2^{\prime}=\alpha_{k_2^{\prime},p_2^{\prime}}$, and $\varepsilon_2^{\prime\prime}=\alpha_{k_2^{\prime\prime},p_2^{\prime\prime}}$;

  \item[(n)] $\varepsilon_1^{\prime}=\beta_{k_1^{\prime},p_1^{\prime}}$, $\varepsilon_1^{\prime\prime}=\beta_{k_1^{\prime\prime},p_1^{\prime\prime}}$, $\varepsilon_2^{\prime}=\alpha_{k_2^{\prime},p_2^{\prime}}$, and $\varepsilon_2^{\prime\prime}=\beta_{k_2^{\prime\prime},p_2^{\prime\prime}}$;

  \item[(o)] $\varepsilon_1^{\prime}=\beta_{k_1^{\prime},p_1^{\prime}}$, $\varepsilon_1^{\prime\prime}=\beta_{k_1^{\prime\prime},p_1^{\prime\prime}}$, $\varepsilon_2^{\prime}=\beta_{k_2^{\prime},p_2^{\prime}}$, and $\varepsilon_2^{\prime\prime}=\alpha_{k_2^{\prime\prime},p_2^{\prime\prime}}$.
\end{itemize}

In case (a) by statements \eqref{theorem-2.9(6)} and \eqref{theorem-2.9(4)} of Theorem~\ref{theorem-2.9} we have that
\begin{align*}
  \beta_{k_1,p_1}&=\alpha_{k_1^{\prime},p_1^{\prime}}\beta_{k_2,p_2}\alpha_{k_1^{\prime\prime},p_1^{\prime\prime}} 
   =\alpha_{k_1^{\prime},p_1^{\prime}}\beta_{k_2k_1^{\prime\prime},p_2k_1^{\prime\prime}} 
   =\beta_{k_1^{\prime}k_2k_1^{\prime\prime},p_2k_1^{\prime\prime}+k_2k_1^{\prime\prime}p_1^{\prime}}; \\
  \beta_{k_2,p_2}&=\alpha_{k_2^{\prime},p_1^{\prime}}\beta_{k_1,p_1}\alpha_{k_2^{\prime\prime},p_1^{\prime\prime}}
   =\alpha_{k_2^{\prime},p_2^{\prime}}\beta_{k_1k_2^{\prime\prime},p_1k_2^{\prime\prime}}
   =\beta_{k_2^{\prime}k_1k_2^{\prime\prime},p_1k_2^{\prime\prime}+k_1k_2^{\prime\prime}p_2^{\prime}},
\end{align*}
which implies the following equalities
\begin{align*}
  k_1&=k_1^{\prime}k_2k_1^{\prime\prime}; \\
  p_1&=p_2k_1^{\prime\prime}+k_2k_1^{\prime\prime}p_1^{\prime}; \\
  k_2&=k_2^{\prime}k_1k_2^{\prime\prime}; \\
  p_2&=p_1k_2^{\prime\prime}+k_1k_2^{\prime\prime}p_2^{\prime}.
\end{align*}
Since $k_1,k_2,k_1^{\prime},k_2^{\prime},k_1^{\prime\prime},k_2^{\prime\prime}$ are positive integers, the equalities $k_1=k_1^{\prime}k_2k_1^{\prime\prime}$ and $k_2=k_2^{\prime}k_1k_2^{\prime\prime}$ imply that $k_1^{\prime}k_2^{\prime}k_1^{\prime\prime}k_2^{\prime\prime}=1$, and hence we have
\begin{equation*}
k_1^{\prime}=k_2^{\prime}=k_1^{\prime\prime}=k_2^{\prime\prime}=1.
\end{equation*}
This implies that $k_1=k_2$. Hence the equalities $k_1^{\prime}=k_2^{\prime}=k_1^{\prime\prime}=k_2^{\prime\prime}=1$, $k_1=k_2$ and the equalities
\begin{align*}
  p_1&=p_2k_1^{\prime\prime}+k_2k_1^{\prime\prime}p_1^{\prime}; \\
  p_2&=p_1k_2^{\prime\prime}+k_1k_2^{\prime\prime}p_2^{\prime}
\end{align*}
imply that
\begin{align*}
  p_1&=p_2+k_1p_1^{\prime}; \\
  p_2&=p_1+k_1p_2^{\prime}.
\end{align*}
Then we have that
\begin{equation*}
  p_1+p_2=p_1+p_2+k_1p_1^{\prime}+k_1p_2^{\prime},
\end{equation*}
and hence $k_1(p_1^{\prime}+p_2^{\prime})=0$. Since $k_1$ is a positive integer, we get that $p_1^{\prime}=-p_2^{\prime}$. Next, the inequalities $p_1^{\prime}\geqslant 0$ and $p_2^{\prime}\geqslant 0$ imply that $p_1^{\prime}=p_2^{\prime}=0$, and by the equality $p_1=p_2+k_1p_1^{\prime}$ we have that $p_1=p_2$. This completes the proof in case (a).

In case (b) by statements \eqref{theorem-2.9(6)} and \eqref{theorem-2.9(5)} of Theorem~\ref{theorem-2.9} we have that
\begin{align*}
  \beta_{k_1,p_1}&=\alpha_{k_1^{\prime},p_1^{\prime}}\beta_{k_2,p_2}\alpha_{k_1^{\prime\prime},p_1^{\prime\prime}} 
   =\alpha_{k_1^{\prime},p_1^{\prime}}\beta_{k_2k_1^{\prime\prime},p_2k_1^{\prime\prime}} 
   =\beta_{k_1^{\prime}k_2k_1^{\prime\prime},p_2k_1^{\prime\prime}+k_2k_1^{\prime\prime}p_1^{\prime}}; \\
  \beta_{k_2,p_2}&=\alpha_{k_2^{\prime},p_1^{\prime}}\beta_{k_1,p_1}\beta_{k_2^{\prime\prime},p_1^{\prime\prime}} 
   =\alpha_{k_2^{\prime},p_2^{\prime}}\beta_{k_1k_2^{\prime\prime},p_1k_2^{\prime\prime}} 
   =\beta_{k_2^{\prime}k_1k_2^{\prime\prime},p_1k_2^{\prime\prime}+k_1k_2^{\prime\prime}p_2^{\prime}},
\end{align*}
which implies the following equalities
\begin{align*}
  k_1&=k_1^{\prime}k_2k_1^{\prime\prime}; \\
  p_1&=p_2k_1^{\prime\prime}+k_2k_1^{\prime\prime}p_1^{\prime}; \\
  k_2&=k_2^{\prime}k_1k_2^{\prime\prime}; \\
  p_2&=p_1k_2^{\prime\prime}+k_1k_2^{\prime\prime}p_2^{\prime}.
\end{align*}
By similar arguments as in case (a) we obtain that $k_1=k_2$ and $p_1=p_2$.

In case (c) by statements \eqref{theorem-2.9(6)}, \eqref{theorem-2.9(4)} and \eqref{theorem-2.9(5)} of Theorem~\ref{theorem-2.9} we have that
\begin{align*}
  \beta_{k_1,p_1}&=\alpha_{k_1^{\prime},p_1^{\prime}}\beta_{k_2,p_2}\alpha_{k_1^{\prime\prime},p_1^{\prime\prime}} 
   =\alpha_{k_1^{\prime},p_1^{\prime}}\beta_{k_2k_1^{\prime\prime},p_2k_1^{\prime\prime}} 
   =\beta_{k_1^{\prime}k_2k_1^{\prime\prime},p_2k_1^{\prime\prime}+k_2k_1^{\prime\prime}p_1^{\prime}}; \\
  \beta_{k_2,p_2}&=\beta_{k_2^{\prime},p_1^{\prime}}\beta_{k_1,p_1}\alpha_{k_2^{\prime\prime},p_1^{\prime\prime}} 
   =\beta_{k_2^{\prime},p_2^{\prime}}\beta_{k_1k_2^{\prime\prime},p_1k_2^{\prime\prime}} 
   =\beta_{k_2^{\prime}k_1k_2^{\prime\prime},k_1k_2^{\prime\prime}p_2^{\prime}},
\end{align*}
which implies the following equalities
\begin{align*}
  k_1&=k_1^{\prime}k_2k_1^{\prime\prime}; \\
  p_1&=p_2k_1^{\prime\prime}+k_2k_1^{\prime\prime}p_1^{\prime}; \\
  k_2&=k_2^{\prime}k_1k_2^{\prime\prime}; \\
  p_2&=k_1k_2^{\prime\prime}p_2^{\prime}.
\end{align*}
Then we have that $k_1=k_1^{\prime}k_2^{\prime}k_1k_2^{\prime\prime}k_1^{\prime\prime}$. The conditions that  $k_1,k_2,k_1^{\prime},k_2^{\prime},k_1^{\prime\prime},k_2^{\prime\prime}$ are positive integers and $k_2^{\prime}\geqslant 2$ contradict the equality $k_1=k_1^{\prime}k_2^{\prime}k_1k_2^{\prime\prime}k_1^{\prime\prime}$, and hence case (c) does not hold.

In case (d) by statements \eqref{theorem-2.9(6)}, \eqref{theorem-2.9(4)} and \eqref{theorem-2.9(5)} of Theorem~\ref{theorem-2.9} we have that
\begin{align*}
  \beta_{k_1,p_1}&=\alpha_{k_1^{\prime},p_1^{\prime}}\beta_{k_2,p_2}\alpha_{k_1^{\prime\prime},p_1^{\prime\prime}}
   =\alpha_{k_1^{\prime},p_1^{\prime}}\beta_{k_2k_1^{\prime\prime},p_2k_1^{\prime\prime}}
   =\beta_{k_1^{\prime}k_2k_1^{\prime\prime},p_2k_1^{\prime\prime}+k_2k_1^{\prime\prime}p_1^{\prime}}; \\
  \beta_{k_2,p_2}&=\beta_{k_2^{\prime},p_1^{\prime}}\beta_{k_1,p_1}\beta_{k_2^{\prime\prime},p_1^{\prime\prime}}
   =\beta_{k_2^{\prime},p_2^{\prime}}\beta_{k_1k_2^{\prime\prime},p_1k_2^{\prime\prime}}
   =\beta_{k_2^{\prime}k_1k_2^{\prime\prime},k_1k_2^{\prime\prime}p_2^{\prime}},
\end{align*}
which implies the following equalities
\begin{align*}
  k_1&=k_1^{\prime}k_2k_1^{\prime\prime}; \\
  p_1&=p_2k_1^{\prime\prime}+k_2k_1^{\prime\prime}p_1^{\prime}; \\
  k_2&=k_2^{\prime}k_1k_2^{\prime\prime}; \\
  p_2&=k_1k_2^{\prime\prime}p_2^{\prime}.
\end{align*}
Next, similar arguments as in case (c) show  that case (d) does not hold.

In case (e) the proof is similar to case (b).

In case (f) by statements \eqref{theorem-2.9(5)} and \eqref{theorem-2.9(4)} of Theorem~\ref{theorem-2.9} we have that
\begin{align*}
  \beta_{k_1,p_1}&=\alpha_{k_1^{\prime},p_1^{\prime}}\beta_{k_2,p_2}\beta_{k_1^{\prime\prime},p_1^{\prime\prime}}
   =\alpha_{k_1^{\prime},p_1^{\prime}}\beta_{k_2k_1^{\prime\prime},p_2k_1^{\prime\prime}}
   =\beta_{k_1^{\prime}k_2k_1^{\prime\prime},p_2k_1^{\prime\prime}+k_2k_1^{\prime\prime}p_1^{\prime}}; \\
  \beta_{k_2,p_2}&=\alpha_{k_2^{\prime},p_1^{\prime}}\beta_{k_1,p_1}\beta_{k_2^{\prime\prime},p_1^{\prime\prime}}
   =\alpha_{k_2^{\prime},p_2^{\prime}}\beta_{k_1k_2^{\prime\prime},p_1k_2^{\prime\prime}}
   =\beta_{k_2^{\prime}k_1k_2^{\prime\prime},p_1k_2^{\prime\prime}+k_1k_2^{\prime\prime}p_2^{\prime}},
\end{align*}
which implies the following equalities
\begin{align*}
  k_1&=k_1^{\prime}k_2k_1^{\prime\prime}; \\
  p_1&=p_2k_1^{\prime\prime}+k_2k_1^{\prime\prime}p_1^{\prime}; \\
  k_2&=k_2^{\prime}k_1k_2^{\prime\prime}; \\
  p_2&=p_1k_2^{\prime\prime}+k_1k_2^{\prime\prime}p_2^{\prime}.
\end{align*}
By similar arguments as in case (a) we obtain that $k_1=k_2$ and $p_1=p_2$.

In case (g) by statements \eqref{theorem-2.9(5)}, \eqref{theorem-2.9(4)}, and \eqref{theorem-2.9(6)} of Theorem~\ref{theorem-2.9} we have that
\begin{align*}
  \beta_{k_1,p_1}&=\alpha_{k_1^{\prime},p_1^{\prime}}\beta_{k_2,p_2}\beta_{k_1^{\prime\prime},p_1^{\prime\prime}}
   =\alpha_{k_1^{\prime},p_1^{\prime}}\beta_{k_2k_1^{\prime\prime},p_2k_1^{\prime\prime}}
   =\beta_{k_1^{\prime}k_2k_1^{\prime\prime},p_2k_1^{\prime\prime}+k_2k_1^{\prime\prime}p_1^{\prime}}; \\
  \beta_{k_2,p_2}&=\beta_{k_2^{\prime},p_1^{\prime}}\beta_{k_1,p_1}\alpha_{k_2^{\prime\prime},p_1^{\prime\prime}}
   =\beta_{k_2^{\prime},p_2^{\prime}}\beta_{k_1k_2^{\prime\prime},p_1k_2^{\prime\prime}}
   =\beta_{k_2^{\prime}k_1k_2^{\prime\prime},p_2^{\prime}p_1k_2^{\prime\prime}},
\end{align*}
which implies the following equalities
\begin{align*}
  k_1&=k_1^{\prime}k_2k_1^{\prime\prime}; \\
  p_1&=p_2k_1^{\prime\prime}+k_2k_1^{\prime\prime}p_1^{\prime}; \\
  k_2&=k_2^{\prime}k_1k_2^{\prime\prime}; \\
  p_2&=p_2^{\prime}p_1k_2^{\prime\prime}.
\end{align*}
The conditions that  $k_1,k_2,k_1^{\prime},k_2^{\prime},k_1^{\prime\prime},k_2^{\prime\prime}$ are positive integers, $k_2^{\prime}\geqslant 2$, and $k_1^{\prime\prime}\geqslant 2$ contradict the equalities
\begin{equation*}
  k_1=k_1^{\prime}k_2k_1^{\prime\prime}=k_1^{\prime}k_2^{\prime}k_1k_2^{\prime\prime}k_1^{\prime\prime}.
\end{equation*}
Hence case (g) does not hold.

In case (h) by statements \eqref{theorem-2.9(5)} and \eqref{theorem-2.9(4)} of Theorem~\ref{theorem-2.9} we have that
\begin{align*}
  \beta_{k_1,p_1}&=\alpha_{k_1^{\prime},p_1^{\prime}}\beta_{k_2,p_2}\beta_{k_1^{\prime\prime},p_1^{\prime\prime}}
   =\alpha_{k_1^{\prime},p_1^{\prime}}\beta_{k_2k_1^{\prime\prime},p_2k_1^{\prime\prime}}
   =\beta_{k_1^{\prime}k_2k_1^{\prime\prime},p_2k_1^{\prime\prime}+k_2k_1^{\prime\prime}p_1^{\prime}}; \\
  \beta_{k_2,p_2}&=\beta_{k_2^{\prime},p_1^{\prime}}\beta_{k_1,p_1}\beta_{k_2^{\prime\prime},p_1^{\prime\prime}} 
   =\beta_{k_2^{\prime},p_2^{\prime}}\beta_{k_1k_2^{\prime\prime},p_1k_2^{\prime\prime}}
   =\beta_{k_2^{\prime}k_1k_2^{\prime\prime},p_2^{\prime}p_1k_2^{\prime\prime}},
\end{align*}
which implies the following equalities
\begin{align*}
  k_1&=k_1^{\prime}k_2k_1^{\prime\prime}; \\
  p_1&=p_2k_1^{\prime\prime}+k_2k_1^{\prime\prime}p_1^{\prime}; \\
  k_2&=k_2^{\prime}k_1k_2^{\prime\prime}; \\
  p_2&=p_2^{\prime}p_1k_2^{\prime\prime}.
\end{align*}
Similar arguments as in case (g) show that case (h) does not hold.

The proofs in cases (i) and (j) are similar to cases (c) and (g), respectively.

In case (k) by statements \eqref{theorem-2.9(5)}, \eqref{theorem-2.9(4)}, and \eqref{theorem-2.9(6)} of Theorem~\ref{theorem-2.9} we have that
\begin{align*}
  \beta_{k_1,p_1}&=\beta_{k_1^{\prime},p_1^{\prime}}\beta_{k_2,p_2}\alpha_{k_1^{\prime\prime},p_1^{\prime\prime}}
   = \beta_{k_1^{\prime},p_1^{\prime}}\beta_{k_2k_1^{\prime\prime},p_2k_1^{\prime\prime}}
   = \beta_{k_1^{\prime}k_2k_1^{\prime\prime},p_1^{\prime}p_2k_1^{\prime\prime}}; \\
  \beta_{k_2,p_2}&=\beta_{k_2^{\prime},p_1^{\prime}}\beta_{k_1,p_1}\alpha_{k_2^{\prime\prime},p_1^{\prime\prime}}
   = \beta_{k_2^{\prime},p_2^{\prime}}\beta_{k_1k_2^{\prime\prime},p_1k_2^{\prime\prime}}
   = \beta_{k_2^{\prime}k_1k_2^{\prime\prime},p_2^{\prime}p_1k_2^{\prime\prime}},
\end{align*}
which implies the following equalities
\begin{align*}
  k_1&=k_1^{\prime}k_2k_1^{\prime\prime}; \\
  p_1&=p_1^{\prime}p_2k_1^{\prime\prime}; \\
  k_2&=k_2^{\prime}k_1k_2^{\prime\prime}; \\
  p_2&=p_2^{\prime}p_1k_2^{\prime\prime}.
\end{align*}
The conditions that  $k_1,k_2,k_1^{\prime},k_2^{\prime},k_1^{\prime\prime},k_2^{\prime\prime}$ are positive integers, $k_1^{\prime}\geqslant 2$, and $k_1^{\prime}\geqslant 2$ contra\-dict the equalities
\begin{equation*}
  k_1=k_1^{\prime}k_2k_1^{\prime\prime}=k_1^{\prime}k_2^{\prime}k_1k_2^{\prime\prime}k_1^{\prime\prime}.
\end{equation*}
Hence case (k) does not hold.

In case (l) by statements \eqref{theorem-2.9(5)} and \eqref{theorem-2.9(6)} of Theorem~\ref{theorem-2.9} we have that
\begin{align*}
  \beta_{k_1,p_1}&=\beta_{k_1^{\prime},p_1^{\prime}}\beta_{k_2,p_2}\alpha_{k_1^{\prime\prime},p_1^{\prime\prime}}
   =\beta_{k_1^{\prime},p_1^{\prime}}\beta_{k_2k_1^{\prime\prime},p_2k_1^{\prime\prime}}
   =\beta_{k_1^{\prime}k_2k_1^{\prime\prime},p_1^{\prime}p_2k_1^{\prime\prime}}; \\
  \beta_{k_2,p_2}&=\beta_{k_2^{\prime},p_1^{\prime}}\beta_{k_1,p_1}\beta_{k_2^{\prime\prime},p_1^{\prime\prime}}
   =\beta_{k_2^{\prime},p_2^{\prime}}\beta_{k_1k_2^{\prime\prime},p_1k_2^{\prime\prime}}
   =\beta_{k_2^{\prime}k_1k_2^{\prime\prime},p_2^{\prime}p_1k_2^{\prime\prime}},
\end{align*}
which implies the following equalities
\begin{align*}
  k_1&=k_1^{\prime}k_2k_1^{\prime\prime}; \\
  p_1&=p_1^{\prime}p_2k_1^{\prime\prime}; \\
  k_2&=k_2^{\prime}k_1k_2^{\prime\prime}; \\
  p_2&=p_2^{\prime}p_1k_2^{\prime\prime}.
\end{align*}
By similar arguments as in case (k) we get that case (l) does not hold.

The proofs in cases (m) and (n) are similar to cases (d) and (h), respectively.

In case (o) by Theorem~\ref{theorem-2.9}\eqref{theorem-2.9(5)} we have that
\begin{align*}
  \beta_{k_1,p_1}&=\beta_{k_1^{\prime},p_1^{\prime}}\beta_{k_2,p_2}\beta_{k_1^{\prime\prime},p_1^{\prime\prime}}
   =\beta_{k_1^{\prime},p_1^{\prime}}\beta_{k_2k_1^{\prime\prime},p_2k_1^{\prime\prime}}
   =\beta_{k_1^{\prime}k_2k_1^{\prime\prime},p_1^{\prime}p_2k_1^{\prime\prime}}; \\
  \beta_{k_2,p_2}&=\beta_{k_2^{\prime},p_1^{\prime}}\beta_{k_1,p_1}\beta_{k_2^{\prime\prime},p_1^{\prime\prime}} 
   =\beta_{k_2^{\prime},p_2^{\prime}}\beta_{k_1k_2^{\prime\prime},p_1k_2^{\prime\prime}} 
   =\beta_{k_2^{\prime}k_1k_2^{\prime\prime},p_2^{\prime}p_1k_2^{\prime\prime}},
\end{align*}
which implies the following equalities
\begin{align*}
  k_1&=k_1^{\prime}k_2k_1^{\prime\prime}; \\
  p_1&=p_1^{\prime}p_2k_1^{\prime\prime}; \\
  k_2&=k_2^{\prime}k_1k_2^{\prime\prime}; \\
  p_2&=p_2^{\prime}p_1k_2^{\prime\prime}.
\end{align*}
By similar arguments as in case (k) we get that case (o) does not hold.
\end{proof}

Proposition~\ref{proposition-3.3} implies the following theorem.

\begin{theorem}\label{theorem-3.4}
If $\mathscr{F}=\{[0),[1)\}$ then Green's relations $\mathscr{R}$, $\mathscr{L}$, $\mathscr{H}$, $\mathscr{D}$, and $\mathscr{J}$  on the monoid $\operatorname{\boldsymbol{End}}^1_*(\boldsymbol{B}_{\omega}^{\mathscr{F}})$ coincide with the relation of equality.
\end{theorem}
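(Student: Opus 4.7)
The plan is to assemble the theorem directly from the preceding results. By the first assertion of Theorem~\ref{theorem-2.9}, every element of $\boldsymbol{End}^1_*(\boldsymbol{B}_{\omega}^{\mathscr{F}})$ lies in exactly one of the two disjoint families $S_{\alpha}$ and $S_{\beta}$ (the distinction being whether $(0,0,[1))$ is sent into $\boldsymbol{B}_{\omega}^{\{[1)\}}$ or $\boldsymbol{B}_{\omega}^{\{[0)\}}$). So any Green-related pair $(\varepsilon_1,\varepsilon_2)$ is either a ``same-type'' pair (both in $S_{\alpha}$ or both in $S_{\beta}$) or a ``mixed'' pair (one in $S_{\alpha}$, the other in $S_{\beta}$).

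First I would dispose of the mixed case. Lemma~\ref{lemma-3.1} already shows that neither $\alpha_{k_1,p_1}\mathscr{R}\beta_{k_2,p_2}$ nor $\alpha_{k_1,p_1}\mathscr{L}\beta_{k_2,p_2}$ can hold, and Lemma~\ref{lemma-3.2} does the same for $\mathscr{J}$. Since $\mathscr{H}=\mathscr{L}\cap\mathscr{R}$ and $\mathscr{D}\subseteq\mathscr{J}$, this rules out mixed pairs for all five Green relations in one stroke. Thus it only remains to handle same-type pairs.

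For same-type pairs, Proposition~\ref{proposition-3.3} already records exactly what is needed: within each of $S_{\alpha}$ and $S_{\beta}$, the relations $\mathscr{R}$, $\mathscr{L}$ and $\mathscr{J}$ each force $k_1=k_2$ and $p_1=p_2$, i.e.\ they reduce to equality. Combined with the mixed-case argument, this yields $\mathscr{R}=\mathscr{L}=\mathscr{J}=\Delta$ on $\boldsymbol{End}^1_*(\boldsymbol{B}_{\omega}^{\mathscr{F}})$, where $\Delta$ denotes the identity relation. Finally, the standard inclusions
\begin{equation*}
\Delta\subseteq\mathscr{H}=\mathscr{R}\cap\mathscr{L}\subseteq\mathscr{R},\mathscr{L}\subseteq\mathscr{D}=\mathscr{R}\circ\mathscr{L}\subseteq\mathscr{J}
\end{equation*}
sandwich $\mathscr{H}$ and $\mathscr{D}$ between $\Delta$ and $\mathscr{J}=\Delta$, forcing them to be $\Delta$ as well.

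In short, no actual computation is needed here; the main obstacle, namely the multiplication table of Theorem~\ref{theorem-2.9} and the careful case analysis already carried out in Lemmas~\ref{lemma-3.1}, \ref{lemma-3.2} and Proposition~\ref{proposition-3.3}, has been done. The proof of Theorem~\ref{theorem-3.4} is then a two-line bookkeeping argument: invoke the three preceding results to get $\mathscr{R}=\mathscr{L}=\mathscr{J}=\Delta$, then apply the standard lattice inclusions among Green's relations to squeeze $\mathscr{H}$ and $\mathscr{D}$ into $\Delta$.
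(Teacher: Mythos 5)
Your proposal is correct and follows exactly the route the paper takes: the paper's entire proof is the single sentence ``Proposition~\ref{proposition-3.3} implies the following theorem,'' and your argument simply spells out the implicit bookkeeping (mixed pairs excluded by Lemmas~\ref{lemma-3.1} and~\ref{lemma-3.2}, same-type pairs by Proposition~\ref{proposition-3.3}, then $\mathscr{H}$ and $\mathscr{D}$ squeezed via the standard inclusions among Green's relations). No discrepancies to report.
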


\section*{{Acknowledgements}}

The authors acknowledge the referee for his/her comments and suggestions.


\end{document}